\ifdefined\pdfoutput\pdfoutput=1\fi
\documentclass[12pt]{amsart}

\usepackage{graphicx}%
\usepackage{multirow}%
\usepackage{amsmath,amssymb,amsfonts,stmaryrd}%
\usepackage{amsthm}%
\usepackage{microtype}%
\usepackage[hypcap=false]{caption}
\newcommand{\R}{\mathbb{R}}

\newcommand{\CA}{\mathcal{A}}
\newcommand{\BB}{\mathcal{B}}
\newcommand{\CC}{\mathcal{C}}
\newcommand{\DD}{\mathcal{D}}

\newcommand{\GG}{\mathcal{G}}

\newcommand{\II}{\mathcal{I}}
\newcommand{\LL}{\mathcal{L}}
\newcommand{\NN}{\mathcal{N}}
\newcommand{\PP}{\mathcal{P}}
\newcommand{\QQ}{\mathcal{Q}}

\newcommand{\TT}{\mathcal{T}}

\newcommand{\ff}{\mathbf{f}}
\newcommand{\kk}{\mathbf{k}}

\newcommand{\nn}{\mathbf{n}}

\newcommand{\uu}{\mathbf{u}}
\newcommand{\vv}{\mathbf{v}}

\newcommand{\xx}{\mathbf{x}}
\newcommand{\yy}{\mathbf{y}}
\newcommand{\zz}{\mathbf{z}}

\newcommand{\ess}{\mathbf{s}}
\renewcommand{\O}{{\mathbf O}}
\newcommand{\0}{\mathbf{0}}

\newcommand{\ga}{\alpha}
\newcommand{\gb}{\beta}
\newcommand{\gc}{\gamma}
\newcommand{\gd}{\delta}
\newcommand{\vf}{\varphi}
\newcommand{\gk}{\kappa}
\newcommand{\GD}{\Delta}
\newcommand{\ve}{\varepsilon}
\newcommand{\GO}{\Omega}
\newcommand{\go}{\omega}
 \newcommand{\gl}{\lambda}
\newcommand{\gth}{\theta}
\newcommand{\GC}{\Gamma}
\newcommand{\gz}{\zeta}

\newcommand{\GS}{\Sigma}

\newcommand{\gtt}{\boldsymbol{\tau}}

\newcommand{\nnu}{\boldsymbol{\nu}}

\newcommand{\Fh}{\hat{F}}

\newcommand{\ph}{\hat{p}}

\newcommand{\sh}{\hat{s}}
\newcommand{\vh}{\hat{v}}
\newcommand{\wh}{\hat{w}}
\newcommand{\rhoh}{\hat{\rho}}

\newcommand{\db}{\bar{d}}

\newcommand{\rb}{\bar{r}}
\newcommand{\Ub}{\bar{U}}
\newcommand{\vb}{\bar{v}}

\newcommand{\zb}{\bar{z}}

\newcommand{\gbb}{\bar{\beta}}

\newcommand{\Vt}{\tilde{V}}

\newcommand{\pt}{\tilde{p}}

\newcommand{\st}{\tilde{s}}
\newcommand{\ut}{\tilde{u}}
\newcommand{\wt}{\tilde{w}}

\newcommand{\rhot}{\tilde{\rho}}

\newcommand{\Ubh}{\hat{\bar{U}}}

\newcommand{\wht}{\tilde{\hat{w}}}
\newcommand{\pht}{\tilde{\hat{p}}}
\newcommand{\sht}{\tilde{\hat{s}}}
\newcommand{\rhoht}{\tilde{\hat{\rho}}}
\newcommand{\uht}{\tilde{\hat{u}}}

\newcommand{\sgn}{\text{\rm sgn}}

\renewcommand{\div}{\text{\rm div}\,}
\newcommand{\grad}{\nabla}

\newcommand{\dist}{\text{\rm dist}\,}
\newcommand{\DT}{\text{\rm D}}
\newcommand{\po}{\partial}
\newcommand{\dx}{\mathrm{d}\,}

\newcommand{\lbt}{\llbracket}
\newcommand{\rbt}{\rrbracket}

\theoremstyle{plain}
\newtheorem{theorem}{Theorem}[section]

\newtheorem*{proba}{Problem A}
\newtheorem*{probb}{Problem B}
\newtheorem*{probc}{Problem MBV}

\newtheorem{lemma}{Lemma}[section]

\newtheorem{proposition}{Proposition}[section]

\theoremstyle{definition}
\newtheorem{definition}{Definition}[section]
\theoremstyle{remark}

\newtheorem{remark}{Remark}[section]
\numberwithin{equation}{section}

\usepackage[hidelinks,bookmarksnumbered=true,breaklinks=true]{hyperref}
\hypersetup{
	pdftitle={Local stability of direct Mach configurations},
	pdfauthor={Jun Chen and Xuemei Deng},
	pdfsubject={Two-dimensional steady compressible Euler equations; Mach configuration; shock polar; mixed boundary value problem},
	pdfkeywords={Mach configuration, shock polar, compressible Euler equations, mixed boundary value problem, subsonic flow},
	pdfstartview=FitH
}
\renewcommand{\O}{{\mathbf O}}

\begin{document}

\title{Local stability of direct Mach configurations}

\author{Jun Chen}
\address{Three Gorges Mathematical Research Center, China Three Gorges University,
No. 8 University Road, Yichang, Hubei 443002, China}
\author{Xuemei Deng}
\address{Three Gorges Mathematical Research Center, China Three Gorges University,
No. 8 University Road, Yichang, Hubei 443002, China}

\begin{abstract}
We investigate the local stability of \emph{direct} Mach configurations
for the two-dimensional steady compressible Euler equations. Given a
piecewise constant Mach configuration that consists of an incident shock,
a reflected shock, a Mach stem and a slip line, with subsonic downstream
states, we prove the following: under a transversality condition on the
shock polar loops and a suitable condition on the truncation segment, every
sufficiently small perturbation of the incoming flow gives rise to a
Mach configuration that is a small perturbation of the background
one, in the sense that the reflected shock, the Mach stem and the slip
line, together with the downstream subsonic flow, stay close to their
background counterparts; moreover, the solution is unique in the class of
solutions satisfying the a priori estimate (for incoming flows that are
small in a slightly stronger norm). Lagrangian coordinates are employed to transform
the unknown contact discontinuity curve into a fixed boundary and to
reduce the full Euler system to a first-order elliptic system for the flow
direction and the pressure. The key idea in dealing with the contact
discontinuity is to solve a mixed boundary value problem, in divergence
form with discontinuous coefficients, for a single elliptic equation, so
that the contact discontinuity conditions are naturally preserved as
compatibility conditions for the solutions of the elliptic problem.
\end{abstract}

\keywords{Mach configuration, shock polar, compressible Euler equations, mixed boundary value problem, subsonic flow}
\subjclass[2020]{35L65, 35Q31, 35Q35, 35J67, 35R35}

\maketitle


\section{Introduction} \label{sec-setup}

The phenomenon of shock reflection upon impingement on a solid wall was
first reported by Ernst Mach \cite{Mach} in 1878. After a long period of
stagnation, the subject was systematically investigated by John von
Neumann in the early 1940s; he provided the first comprehensive
theoretical and experimental analysis, established the fundamental
frameworks of the two-shock theory for regular reflection and the
three-shock theory for Mach reflection, and applied the shock polar
analysis -- which goes back to Meyer \cite{Meyer} and Busemann
\cite{Busemann}; see also \cite{CF} -- to determine the transition
criteria between the different reflection patterns. We refer to
\cite{vonNeumann} and to the classical monograph of Courant and Friedrichs
\cite{CF} for a comprehensive account. A fundamental observation, due to
von Neumann \cite{vonNeumann} and Courant--Friedrichs \cite{CF}, is that a
structure consisting of three shocks alone cannot satisfy the
Rankine--Hugoniot conditions: an additional contact discontinuity must
emanate from the triple point. Depending on the angle of incidence, either
a regular reflection or a Mach reflection occurs. In the latter case, the
incident shock, the reflected shock and the Mach stem meet at a triple
point, and a slip line emanates from it; this triple-shock structure is
called a \emph{Mach configuration}. The stability of Mach configurations
is of fundamental importance in gas dynamics, since only stable
configurations are physically realizable, and it has attracted
considerable attention; see, for instance, \cite{ChenSX1,ChenSX2,Serre}
and the references therein. In this paper we study the local stability of
\emph{direct} Mach configurations, for which the angle between the Mach
stem and the slip line is smaller than a right angle.

The regular reflection pattern, which consists of the incident shock and
the reflected shock only, has a simpler structure, and it is by now
well understood, although the available results are mainly for the
potential flow equation and in the self-similar setting. Chen and Feldman proved
the global existence of solutions for large-angle wedges in \cite{CF10} and provided a systematic account of this theory in the
monograph \cite{CF18}. The optimal regularity $C^{1,1}$ of
the solution at the corner where the reflected shock and the sonic circle meet was
established by Bae, Chen and Feldman \cite{BCF};
the convexity of the reflected shock was shown by Chen, Feldman and Xiang
\cite{CFX}. The same self-similar set-up covers the reflection
of a supersonic flow by a solid ramp, that is, the Prandtl--Meyer
reflection, for which Prandtl \cite{Prandtl} conjectured that the weak
shock configuration is the physically admissible one. This problem was
studied by Bae, Chen and Feldman \cite{BCF1,BCF2} and by Elling and Liu
\cite{EllingLiu}.

Mach reflection, by contrast, is much less understood. It involves three
shocks and a slip line, and it is closely related to the shock problems
for steady flows past wedges: when a uniform supersonic flow impinges on a
solid wedge, the wave pattern is determined by the interaction of a shock
with a solid boundary, and the steady shock problems past wedges can be
formulated as free boundary problems, with the shock front as the unknown
boundary; see Chen, Chen and Feldman \cite{CCF1} and, for
three-dimensional wedges, Chen, Chen and Xiang \cite{CCX}. The stability of Mach configurations for the full Euler system
was initiated by Chen Shuxing, who treated the steady direct Mach
configuration \cite{ChenSX1}, the pseudo-stationary Mach configuration
\cite{ChenSX2} and the E-H type Mach configuration \cite{ChenSX3}; we
also refer to \cite{Serre} for the unsteady self-similar shock reflection
problem. The global uniqueness of a flat Mach configuration among
piecewise smooth flows was established by Fang, Liu and Yuan
\cite{FangLiuYuan}.

The feature that distinguishes a Mach configuration from the regular
reflection pattern above is the slip line, that is, a contact
discontinuity attached to the triple point, and the stability of contact
discontinuities has been studied in several different settings. Bae
\cite{Bae} treated the contact discontinuity for steady subsonic flows in
an infinite duct, and Bae and Park \cite{BP} studied the same problem in
infinitely long nozzles; Chen, Huang, Wang and Xiang \cite{CHWX} obtained
steady Euler flows with large vorticity and characteristic discontinuities
in arbitrarily long nozzles; see also \cite{CXZ,ADV1}, where vortex
lines attached to profiles are located by the implicit function theorem. In
the present paper we take a different route, based on
the framework of Chen and Deng \cite{ChenDeng} for elliptic equations with
discontinuous coefficients in domains with corners: the two regions
separated by the contact discontinuity are treated as a whole domain,
so that the conditions on the contact discontinuity enter the elliptic
problem as compatibility conditions and are satisfied automatically by its
solution.

Consider the following two-dimensional steady
compressible Euler equations:
\begin{align}
	& \div (\rho \uu)=0,\label{Euler1}\\
	&\div \left(\rho{\uu\otimes\uu}\right)
	+\nabla p=0,\label{Euler2}\\
	&\div \left(\rho\uu\left(E+\frac{p}{\rho}\right)\right)=0,
	\label{Euler3}
\end{align}
where $\nabla =(\po_{x_1}, \po_{x_2})^\top $ is the gradient operator, $\uu=(u_1,
u_2)^\top$ is the velocity field, $\rho$ is the density, $p$ is the pressure, and $E$ is the energy. We assume the gas is polytropic with adiabatic exponent $\gc>1$, so the energy $E$ is given by:
$$
E=\frac{1}{2}|\uu|^2+\frac{p}{(\gc-1)\rho}.
$$
The sonic speed of
the flow is given by
$$
c:=\sqrt{\gc p/\rho},
$$
and the Mach number is defined by
$$
M:=\frac{|\uu|}{c}.
$$
We say the flow is subsonic if the Mach number satisfies $M <1$ at every point of the flow, and supersonic if $M >1$ at every point of the flow.

{\bf Rankine-Hugoniot conditions on discontinuity curves.} Suppose that an open domain $\GO$ in $\R^2$ is divided by a $C^1$ curve $\CC$ into subdomains $\GO^+$ and $\GO^-$. Assume that $U=(\rho, \uu, p)^\top$ is a piecewise $C^1$ solution of Euler equations \eqref{Euler1}--\eqref{Euler3} in each domain $\GO^+$ and $\GO^-$, and $U$ is continuous up to the boundary $\CC$ in each subdomain. We denote the restriction of $U$ on $\GO^+\cup \CC$ by $U^+$ and on $\GO^-\cup \CC$ by $U^-$. If $U$ is a weak solution to \eqref{Euler1}--\eqref{Euler3} in the whole domain $\GO$, integration by parts will give rise to the following so called Rankine-Hugoniot conditions on the curve $\CC$:
\begin{align}
	& \lbt  \rho \uu \rbt \cdot \nn =0,\label{RH1}\\
	&\left\lbt\rho{\uu\otimes\uu}+pI\right \rbt
	\nn  =\mathbf{0},\label{RH2}\\
	&\left\lbt\rho\uu \left(E+\frac{p}{\rho}\right)\right \rbt \cdot \nn=0,
	\label{RH3}
\end{align}
where $\nn$ is the unit normal vector on $\CC$, and the bracket $\lbt\ \rbt$ denotes the jump of the dependent variable from one subdomain to the other, i.e., for any smooth function $f: \R^4 \to \R$, $\lbt f(U)\rbt = f(U^+)- f(U^-)$.
Condition \eqref{RH2}, taken dot product with $\nn$ and with a unit tangential vector $\gtt$ respectively, leads to
\begin{align}
	&  \lbt\rho (\uu \cdot \nn)^2 +p \rbt =0,\label{RH4}\\
	&\lbt \rho (\uu \cdot \nn) (\uu \cdot \gtt) \rbt  =0\label{RH5}.
\end{align}
When the flow travels across $\CC$, i.e., $\uu \cdot \nn \neq 0$ on $\CC$, and satisfies the entropy condition (the entropy, or equivalently, the pressure $p$, increases across $\CC$ ), the curve $\CC$ is called a shock; if the flow travels along both sides of $\CC$ so that $\uu \cdot \nn \equiv 0$ on $\CC$, $\CC$ is said to be a contact discontinuity or a characteristic discontinuity. In the latter case, condition \eqref{RH4} and the slip condition
\begin{align}
	& \uu^{\pm} \cdot \nn = 0\label{RH6}
\end{align}
imply
\begin{align}
	& p^+ = p^-\label{RH7}
\end{align}
along the contact discontinuity $\CC$. It is obvious that conditions \eqref{RH6} and \eqref{RH7} together ensure
the Rankine-Hugoniot conditions \eqref{RH1}--\eqref{RH3}. Denote the flow direction by
\begin{align}
	& w= \frac{u_2}{u_1}\label{def-w},
\end{align}
and the slip condition \eqref{RH6} gives rise to
\begin{align}
	&  w^+= w^- \label{con-w}
\end{align}
along $\CC$. For our convenience, hereafter, we will use $(u_1, \rho, w, p)$ as the unknown variables for the Euler flows, still denoted by $U$.

\begin{figure}[t]
\begin{minipage}[t]{0.48\textwidth}
	\centering
	\includegraphics[width=0.9\linewidth]{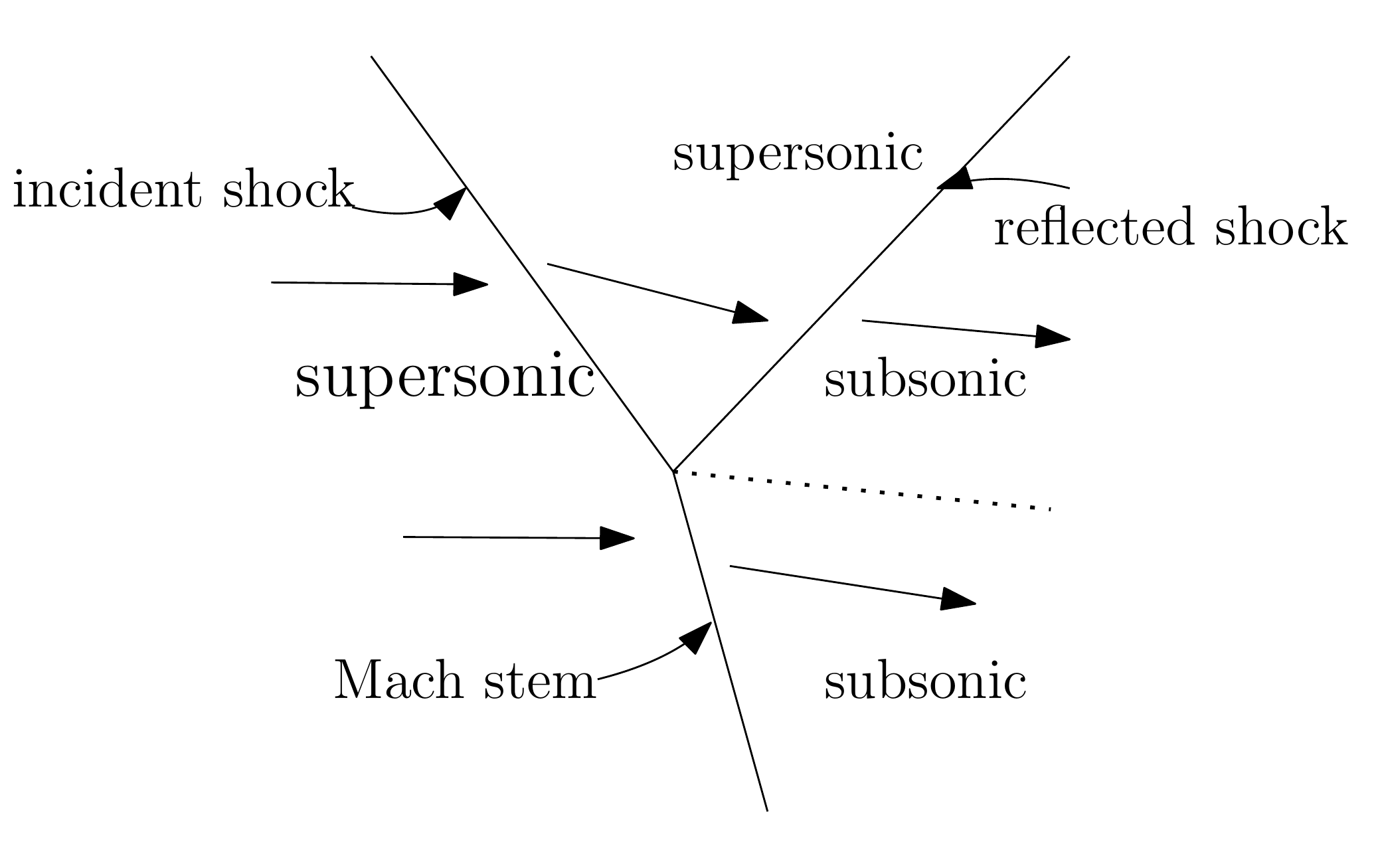}
	\captionof{figure}{Local structure of a direct Mach configuration.\label{fig:machconfig}}
\end{minipage}%
\hfill
\begin{minipage}[t]{0.52\textwidth}
	\centering
	\includegraphics[width=\linewidth]{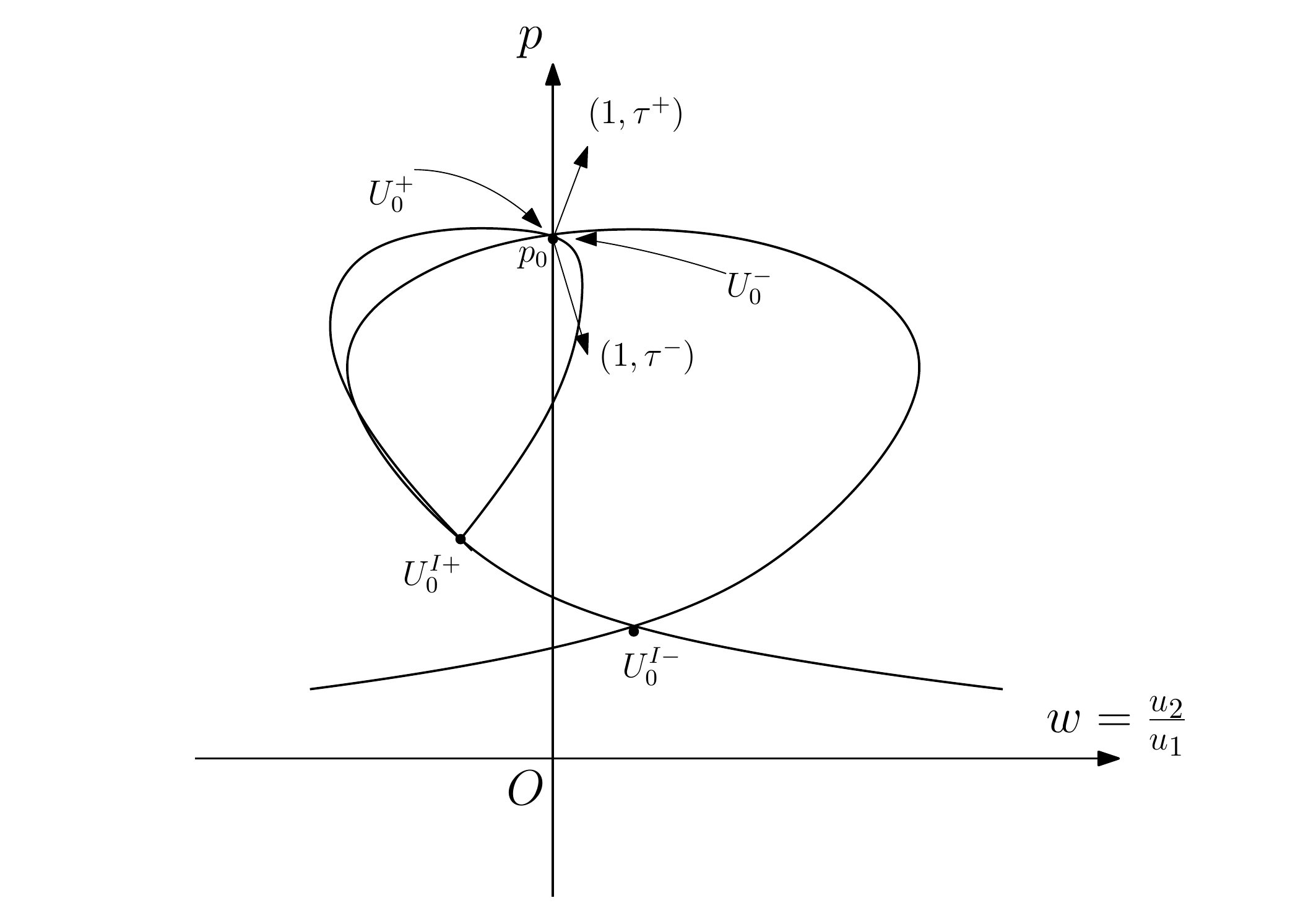}
	\captionof{figure}{Shock polar loops for a direct Mach configuration.\label{fig1}}
\end{minipage}
\end{figure}

When a supersonic incident shock $S^I$ impinges on a solid wall at an angle greater than the critical value, a Mach reflection occurs.
We are concerned with the stability of the local structure of the Mach configuration.
Figure \ref{fig:machconfig} illustrates the classical triple-shock structure of the Mach configuration:
$U^{I-}$ denotes the incoming flow ahead of the incident shock $S^I$; $U^{I+}$ is the flow behind the shock $S^I$; $S^+$ is the reflected shock; $S^-$ is the Mach stem; and $\GC$ is the slip line.
The downstream flows $U^+$ and $U^-$ are both subsonic and are separated by $\GC$ (also called E-E type Mach configuration).

\begin{remark} \label{rem-EE}
	The state $U^+$ behind the reflected shock $S^+$ may be subsonic or supersonic, while the state $U^-$ behind the
	Mach stem $S^-$ is always subsonic (see, e.g., \cite{ChenSX3}). Roughly speaking, the subsonic flow corresponds to an elliptic equation and the supersonic one to a hyperbolic equation. Hence, according to the types of the downstream flows, one
	distinguishes the elliptic--elliptic (E-E) type, and the elliptic--hyperbolic
	(E-H) type Mach configurations. In this
	paper we consider the E-E type, for which both $U^+$ and $U^-$ are
	subsonic, so that the equations for the downstream flow are elliptic in
	both $\GO^+$ and $\GO^-$; the E-H type was studied by Chen Shuxing
	\cite{ChenSX3}.
\end{remark}

Consider a piecewise constant Mach configuration with four constant states $U^{I-}_0, U^{I+}_0,U^-_0, U^+_0$ separated by straight shocks $S^I_0$, $S^+_0$, $S^-_0$ and the slip line $\Gamma_0$. Based on the Rankine-Hugoniot conditions \eqref{RH1}--\eqref{RH3} and the contact discontinuity conditions \eqref{RH4} and \eqref{RH5}, we may employ shock polar analysis to determine the downstream constant states $U^-_0, U^+_0$ when the states of the incoming flow $U^{I-}_0, U^{I+}_0$ are given. In fact, given $U^{I-}_0$, all possible states $U^{I+}_0$ connected to $U^{I-}_0$ through a shock form a one-parameter curve. Due to the conditions \eqref{RH4} and \eqref{RH5} across the slip line, it is convenient to use the variables $w$ and $p$ to draw the curve. Each polar loop is the locus, in the $w$-$p$ plane, of the states that can be connected to the state from which the loop is drawn by a single shock. In the $w$-$p$ plane, we choose a supersonic state $U^{I+}_0$ on the loop starting from $U^{I-}_0$ and draw another shock polar loop starting from $U^{I+}_0$. The intersection point of the two loops corresponds to the downstream states $U^-_0, U^+_0$, whose $p$ and $w$ values are equal, and hence meets the requirement of conditions \eqref{RH4} and \eqref{RH5}. For our convenience, we rotate the coordinate system so that the downstream flow is parallel to the $x_1$-axis with positive orientation, or equivalently, $w^+_0=w^-_0 =0$ and $u_1>0$ in $\GO^+_0\cup\GO^-_0$ (see Figure \ref{fig1}). Thus, the downstream flow always moves from $x_1=0$ towards $x_1=l$, and the change of variables introduced in Section~\ref{sec-lagrange} is a bi-Lipschitz homeomorphism.

 We also assume that both $U^-_0$ and $U^+_0$ are subsonic states:
\begin{align}
M_0^+ <1, \quad  M_0^- <1. \label{con-subsonic}
\end{align}

Let $(1,\tau^-)$ and $(1,\tau^+)$ be normal vectors to the loops from $U^{I-}_0$ to $U^{-}_0$ and from $U^{I+}_0$ to $U^{+}_0$, respectively, at the intersection point $(0,p_0)$. We assume that the two loops intersect transversally, equivalently, the following transversality condition holds:
\begin{align}
\tau^+\neq \tau^-. \tag{TC} \label{TC}
\end{align}

We perturb the incoming flow $(U^{I-}_0, U^{I+}_0)$ to $(U^{I-} , U^{I+})$ and the incident shock $S^I_0$ to $S^I$, and investigate if the Mach configuration is stable under small perturbations. In other words, we want to show that there exist unique reflected shock $S^+$, the Mach stem $S^-$, the slip line $\GC$ and the downstream flow $U^+,U^-$, which are small perturbations from the given piecewise constant Mach configurations. The procedure of solving the problem shows that the values of $(U^{I-} , U^{I+})$ near the background shocks $S^+_0$ and $S^-_0$ (no information about $S^I$) will be enough to determine the downstream flow. Thus, we set up the problem as follows.

Let $s_0^{\pm}(x_1)= k^{\pm}x_1$ be the shock functions of $S_0^{\pm}$, where $k^+>0$, and $k^-<0$. Let $s^{\pm}$ (with $s^{\pm}(0)=0$) be perturbed shock functions for shocks $S^{\pm}$, $\gz$ (with $\gz(0)=0$) be the position for the contact discontinuity curve $\GC$. Truncate the domain at $x_1=l$, with $l>0$. Denote
 \begin{align*}
 S_0^{\pm} &= \{ \xx: x_2 = k^{\pm}x_1, 0<x_1 <l \},\\
  S^{\pm} &= \{ \xx: x_2 = s^{\pm}(x_1), 0<x_1 <l \},\\
  \II &= (0,l)\times \{0 \},\\
   \GC &= \{ \xx: x_2 = \gz(x_1), 0<x_1 <l \},\\
 \GO^{I+}_0 &=  \{ \xx: \tfrac{1}{2} k^+x_1<x_2 < 2k^+ x_1, 0<x_1 <l \},\\
 \GO^{I-}_0 &=  \{ \xx: 2k^- x_1< x_2 < \tfrac{1}{2} k^- x_1, 0<x_1 <l \},\\
 \GO^+ &=  \{ \xx: \gz(x_1)<x_2 < s^+(x_1), 0<x_1 <l \},\\
 \GO^- &=  \{ \xx:   s^-(x_1)<x_2<\gz(x_1), 0<x_1 <l \},\\
 \LL &= \{ \xx:  0<x_1=l, 2k^-l <x_2  < 2k^+l \}.
 \end{align*}
On the truncation segment $\LL$, we prescribe the following condition:
\begin{align}
\tfrac{\po}{\po x_2}(w+\tau^- p ) = 0. \label{con-trunc}
\end{align}

\begin{remark}
	Prescribing a proper condition on $\LL$ is a subtle issue. One may try to prescribe the value of $p$ or $w$ on $\LL$. However, if we investigate the issue with the help of the shock polar loops, we find that when we perturb the incoming flow within the class of piecewise constant flows, the downstream flow should also be piecewise constant, and these constants are different from the background solution. More precisely, we may fix the incoming constant flow before the incident shock and perturb the flow after the incident shock to $\tilde{U}_0^{I+}$ (see Figure \ref{fig:conditionL}). Then the dashed loop intersects the loop starting from $U_0^{I-}$ at a different point $(\wt_0,\pt_0)$, which means that the perturbed downstream flow has a flow direction and a pressure different from those of the original Mach configuration. Instead of fixing $p$ or $w$ on $\LL$, we impose the weaker condition \eqref{con-trunc}, which allows $w+\tau^- p$ to differ from its background value by an arbitrary constant on $\LL$.
\end{remark}

\noindent\begin{minipage}[t]{0.46\textwidth}
	\centering
\includegraphics[width=0.9\linewidth]{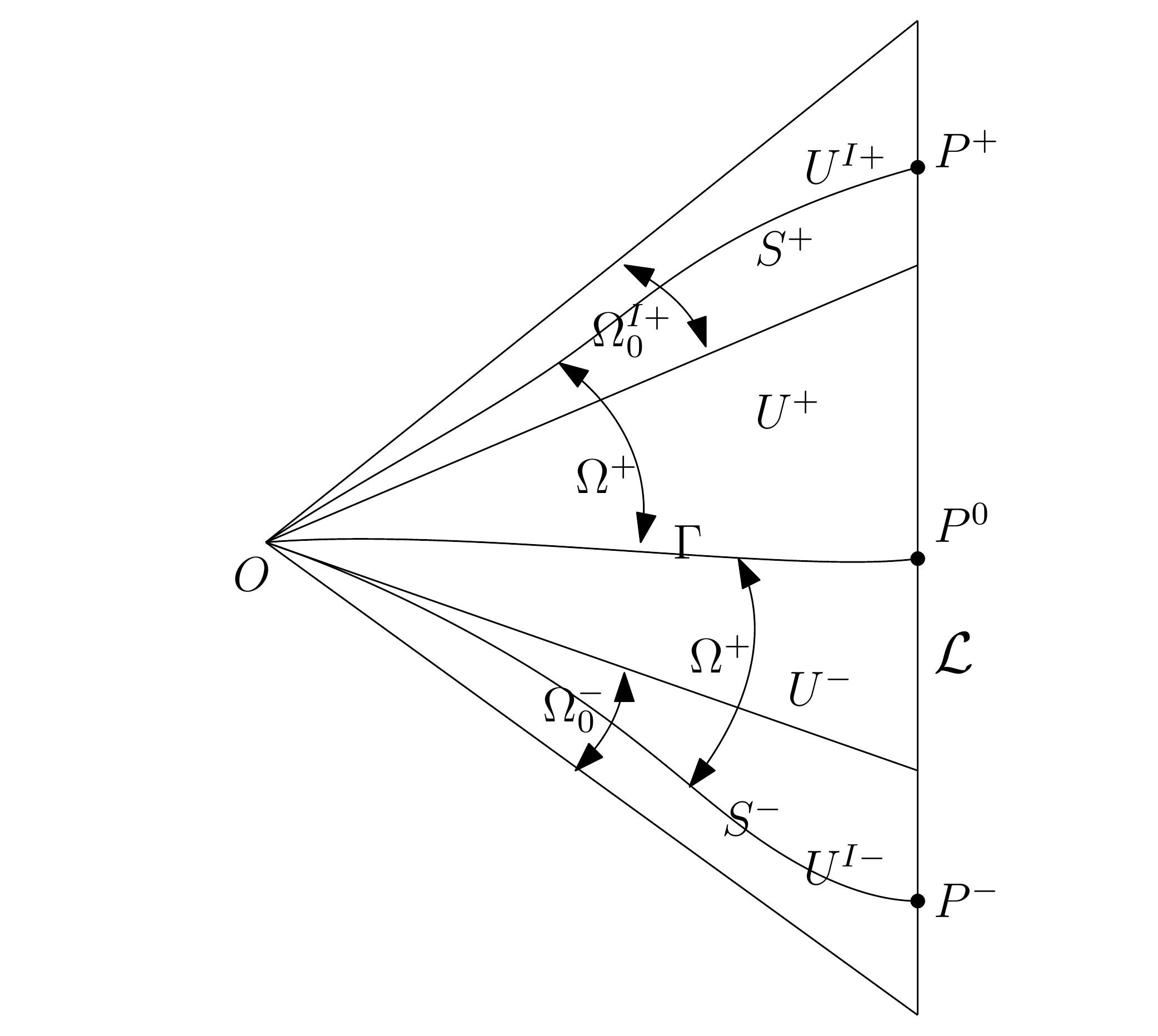}
 \captionof{figure}{Domain of Problem A.}\label{fig:domain}
\end{minipage}%
\hfill
\begin{minipage}[t]{0.54\textwidth}
	\centering
	\includegraphics[width=0.9\linewidth]{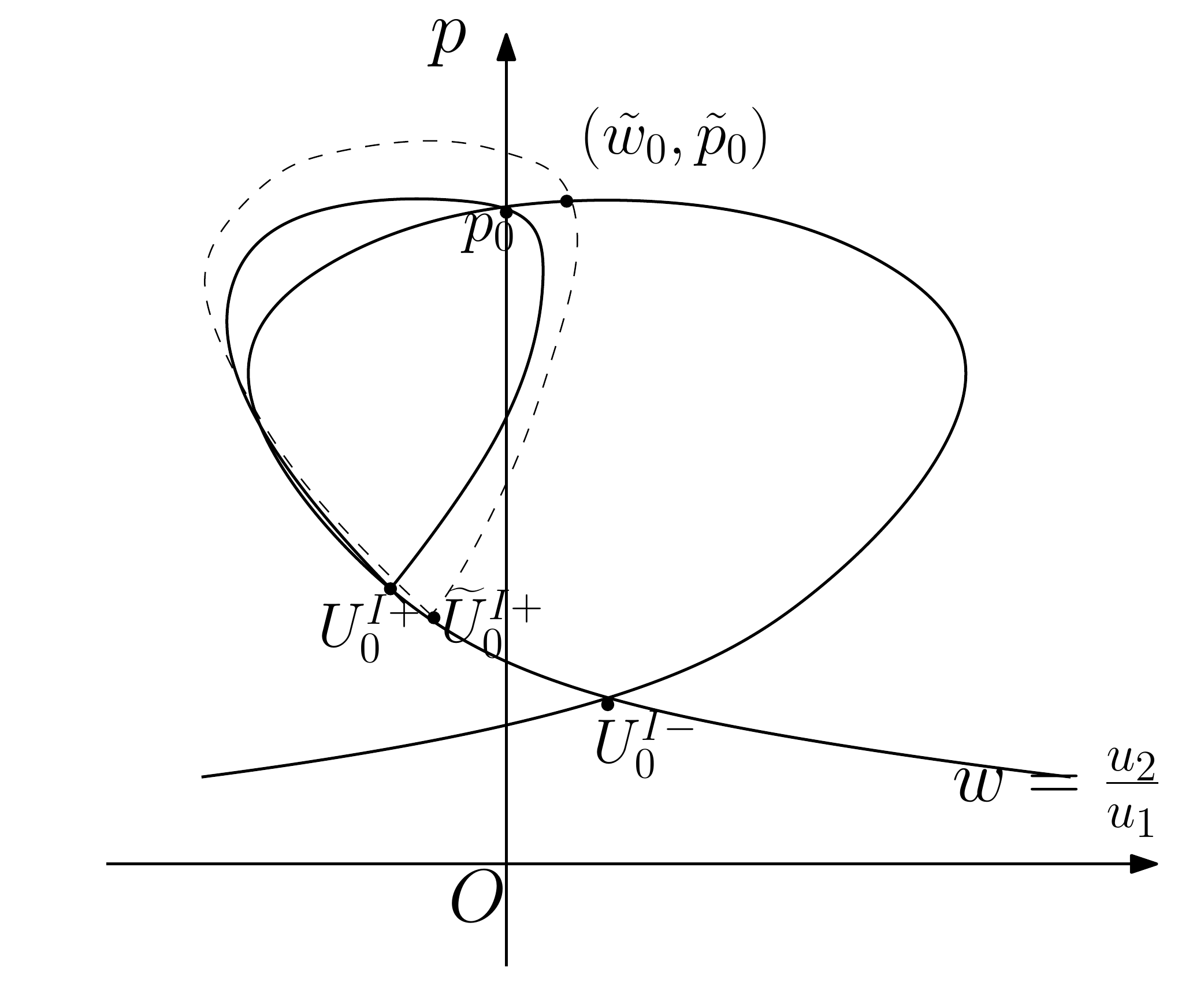}
	\captionof{figure}{Condition on $\LL$. \label{fig:conditionL}}
\end{minipage}

\begin{proba}
Suppose that a piecewise constant Mach configuration $U^{I-}_0$, $U^{I+}_0$, $U^-_0$, $U^+_0$ described above satisfies the transversality assumption \eqref{TC} and subsonicity condition \eqref{con-subsonic} for the downstream flow. Let $U^{I-}, U^{I+}$ be small perturbations from $U^{I-}_0, U^{I+}_0$ in the domains $ \GO^{I-}_0 $ and $\GO^{I+}_0 $, respectively.

 Find the reflected shock $S^+$ in $ \GO^{I+}_0$, the Mach stem $S^-$ in $ \GO^{I-}_0$, the contact discontinuity curve $\GC$, and the downstream solution $ U^+$, $U^-$ in $\GO^+,\GO^-$ respectively, so that the following are satisfied:

 \begin{enumerate}
 	\item $U^+$ and $U^-$ are solutions to Euler equations \eqref{Euler1}--\eqref{Euler3} in $ \GO^+$ and $ \GO^-$ respectively.
 	\item Shock conditions \eqref{RH1}--\eqref{RH3} hold on $S^+$ and $S^-$.
 	\item Contact discontinuity conditions \eqref{RH6}, \eqref{RH7} hold on $\GC$.
 \item Condition \eqref{con-trunc} is satisfied on $\LL$.
  \item Downstream solution $U^{\pm}$, the shocks $S^{\pm}$ and the contact discontinuity $\GC$ are all small perturbations of the background states $U^{\pm}_0$, $S_0^{\pm}$, and $\II$, respectively.
 \end{enumerate}
\end{proba}

Let $\ga \in (0,1)$, $\gb,\gb',\gbb \in \mathbb{R}$, $\gb \le \gb' $, $k$ be
a nonnegative integer, $\kk = (k_1, k_2)$ be an integer-valued
vector with $k_1, k_2 \ge 0$, $|\kk|=k_1 +k_2$, and $\DT^{\kk}=
\po_{x_1}^{k_1}\po_{x_2}^{k_2}$. Let $G$ be a bounded open set and let $\GG$ be a subset of $\po G$. Set \begin{align*}
d^{\GG}_{\xx} = \min( \dist(\xx,\GG),1), \qquad
d^{\GG}_{\xx, \xx'} = \min(d^{\GG}_{\xx}, d^{\GG}_{\xx'}).
\end{align*}

Suppose that $\GG, \PP\subset \po G$ are two disjoint sets, $\GG$ is a continuous curve with the set of two endpoints $\GG' = \{P_1, P_2\}$, and $\PP$ is a set of finite points.
 For a function $f$ defined on $G$, we will define a weighted H\"older with different weights to $\GG, \GG', \PP$. Suppose that $f$ is $C^{-\gb}$ up to the boundary port $\GG$ and $C^{-\gb'}$ to $P_1,P_2$ (assume $\gb, \gb' <0$ for the moment). Since the regularity of $f$ at the endpoints should be no higher than the regularity up to the boundary curve, it is reasonable to assume that $\gb \le \gb' $. Let $\gbb$ be the weight index to $\PP$. The regularity property of $f$ can be described by the following norms.
\begin{align}
	\label{def-normsemi}
	&[ f ]_{k,0;G}^{(\gb,\gb';\GG)(\gbb;\PP)}
	:= \sup_{ \begin{subarray}{c}
				\xx\in G\\
			|\kk|=k
	\end{subarray}}
\begin{array}{l}
	((d^{\GG}_{\xx})^{\max(k+\gb,0)} (d^{\GG'}_{\xx})^{\max(\gb' + \min(k,-\gb),0)} \\
	\cdot (d^{\PP}_{\xx})^{\max(k+\gbb,0)}|\DT^\kk f(\xx)|),
\end{array}\\
	\label{def-norm} &[ f ]_{k,\ga;G}^{(\gb,\gb';\GG)(\gbb;\PP)}
	:= \sup_{ \begin{subarray}{c}
		 	\xx,\xx'\in G\\
		 	\xx\ne\xx'\\
		 	|\kk|=k
	\end{subarray}}
\begin{array}{l}
	((d^{\GG}_{\xx, \xx'})^{\max(k+\ga+\gb,0)} (d^{\GG'}_{\xx, \xx'})^{\max(\gb' + \min(k+\ga,-\gb),0)} \\
	\cdot (d^{\PP}_{\xx, \xx'})^{\max(k+\ga+\gbb,0)}\tfrac{|\DT^\kk f(\xx)-\DT^\kk f(\xx')|}{|\xx-\xx'|^\ga}),
\end{array}
	\\
	& \|f\|_{k,\ga;G}^{(\gb,\gb';\GG)(\gbb;\PP)}
	:= \sum_{i=0}^k [ f ]_{i,0;G}^{(\gb,\gb';\GG)(\gbb;\PP)}
	+ [ f ]_{k,\ga;G}^{(\gb,\gb';\GG)(\gbb;\PP)}.
	\label{def-normvector}
\end{align}
Denote
\begin{align}\label{def-Ckga}
C^{k,\ga}_{(\gb,\gb';\GG)(\gbb;\PP)}(G)	:= \{f: \|f\|_{k,\ga;G}^{(\gb,\gb';\GG)(\gbb;\PP)} <\infty\}
\end{align}
For a function defined on an interval $(0,l)$ we use the same construction with $\GG$ replaced by $(0,l)$ and $\GG'=\{0,l\}$; thus, for example, $\|\cdot\|_{k,\ga;(0,l)}^{(\gb;\{0,l\})}$ means that the only weight is $\gb$ and it is attached to the endpoints $0$ and $l$.
Let $P^+, P^0,P^-$ be the intersection points of $\LL$ with $S^+, \GC, S^-$ respectively. Define
$U^I$ in $\GO^I_0 := \GO^{I+}_0 \cup \GO^{I-}_0 $ by
\begin{align*}
U^I(\xx)= 	\left\{
\begin{array}{ll}
 U^{I+}(\xx), & \xx \in \GO^{I+}_0\\
U^{I-}(\xx), & \xx \in \GO^{I-}_0
\end{array}
\right. .
\end{align*}
Assume $U^I$ is a small perturbation from the background state $U^I_0$:
	\begin{align}
	\|U^I  - U^I_0\|_{2,\ga;\GO^I_0 } \le \ve. \label{con-UI}
\end{align}
Set
$$\PP:= \{P^+, P^-\}.$$
Define the norm for the downstream flow $U$ in $\GO:= \GO^+\cup \GO^-$ as follows:
\begin{align}
&\|f\|_{2,\ga;\GO}^{(-\ga;\PP)(\gb,\gb';\GC)}:=\|f\|_{2,\ga;\GO^+}^{(-\ga;\{P^+\})(\gb,\gb';\GC)}+\|f\|_{2,\ga;\GO^-}^{(-\ga;\{P^-\})(\gb,\gb';\GC)},\label{def-fGO}\\
&\|U\|^*_{\GO}:=\|(u_1,\rho)\|_{2,\ga;\GO}^{(-\ga;\PP)(-\ga,-\ga;\GC)}+\|(w,p)\|_{2,\ga;\GO}^{(-\ga;\PP)(-1-\ga,-\ga;\GC)}.\label{def-UGO}
\end{align}

\begin{remark}
The regularity of $u_1,\rho$ and the regularity of $w,p$ are different: $u_1,\rho$ are $C^\ga$ up to $\GC$ and $w,p$ are $C^{1,\ga}$ up to $\GC$. The difference comes from the different roles of these variables in the hyperbolic-elliptic coupled system. In fact, $w,p$ are obtained by solving a first order elliptic system. Hence, $w,p$ is $C^{1,\ga}$ to $\GC$, which is higher than $C^\ga$ regularity at the corners $\O, P^0$. On the other hand, $u_1,\rho$ are solved through two hyperbolic equations, so the $C^\ga$ regularity is conserved along $\GC$.
\end{remark}

Denote the set of data for the background state and the domain by
\begin{align*}
\Xi := \{ U^{I-}_0, U^{I+}_0, U^-_0, U^+_0, l\}.
\end{align*}
Then the main results in this paper can be stated as follows.

\begin{theorem} \label{thm1}
For suitably small $\ga \in (0,1)$, depending on $\Xi$, there exists a small positive constant $\ve$, when \eqref{con-UI} holds, there is a solution to {\bf Problem A}, satisfying the following estimate:
\begin{align}
\|U - U_0\|^*_{\GO} + \|s^{\pm}-s_0^{\pm}\|_{3,\ga;(0,l)}^{(-\ga-1;\{0,l\})}+ \|\gz\|_{2,\ga;(0,l)}^{(-\ga-1;\{0,l\})}\le C\|U^I  - U^I_0\|_{2,\ga;\GO^I_0 }, \label{est-U1}
\end{align}
where $C$ is a positive constant depending on $\Xi$ and $\ga$.

If we assume that
\begin{align}
\|U^I  - U^I_0\|_{3,\ga;\GO^I_0 } \le \ve, \label{con-unique-UI}
\end{align}
and the solution satisfies
\begin{align}
	\|U - U_0\|^*_{\GO} + \|s^{\pm}-s_0^{\pm}\|_{3,\ga;(0,l)}^{(-\ga-1;\{0,l\})} \le C \ve, \label{est-unique-U}
\end{align}
then the solution to {\bf Problem A} satisfying \eqref{est-unique-U} is unique.
\end{theorem}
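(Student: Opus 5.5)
We argue by a fixed-point iteration built on a linearized problem, following the framework of \cite{ChenDeng} and \cite{ChenSX1}. The first step is to pass to Lagrangian coordinates. Using the stream function $\psi$ with $\po_{x_2}\psi=\rho u_1$ and $\po_{x_1}\psi=-\rho u_2$, normalized so that $\psi=0$ on $\GC$, we introduce new coordinates $(y_1,y_2)=(x_1,\psi)$. This change of variables is bi-Lipschitz because $u_1>0$. The slip line $\GC$ then becomes the fixed segment $\{y_2=0\}$, and the shocks $S^\pm$ become free curves $y_2=\sh^\pm(y_1)$. In the new variables the Euler system splits into two parts. The first part consists of two transport equations, namely conservation of the Bernoulli constant $\tfrac12|\uu|^2+\tfrac{\gc p}{(\gc-1)\rho}$ and of the entropy $p\rho^{-\gc}$ along $y_2=\text{const}$. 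The second part is a first-order system for $(w,p)$,
\begin{align*}
\po_{y_1} w - A(U)\po_{y_2} p = 0,\qquad \po_{y_2} w - B(U)\po_{y_1} p=0 ,
\end{align*}
whose coefficients are discontinuous across $y_2=0$. By \eqref{con-subsonic}, this system is elliptic on each side. The contact conditions \eqref{RH7} and \eqref{con-w} become continuity of $w$ and $p$ across $y_2=0$.

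The second step is to rewrite the Rankine--Hugoniot conditions on $S^\pm$, after eliminating the shock slopes, as oblique conditions of the form
\begin{align*}
w+\tau^\pm p = g^\pm(U^I,U,\sh^\pm),
\end{align*}
where $g^\pm$ is quadratically small, together with ODEs $\tfrac{d}{dy_1}\sh^\pm=\dots$ for the shock positions. Here $\tau^\pm$ are exactly the normals to the polar loops. On $y_1=l$ we keep \eqref{con-trunc}.

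Next we eliminate $w$. Introduce a potential $\phi$ with $\po_{y_2}\phi = w$ and $\po_{y_1}\phi = A p$, suitably rescaled on each side. Then $p$ satisfies a single divergence-form equation $\div(a\nabla p)=f$ on the whole domain, with coefficient $a$ jumping across $y_2=0$. Equivalently, one can work with a stream-type function whose transmission conditions are the flux conditions. The key point is that, when the problem is posed in divergence form on the union $\GO^+\cup\GO^-$, the transmission conditions (continuity of $p$ and of the flux, which is $w$) are natural, so they hold automatically for weak solutions. The boundary conditions become mixed: oblique conditions on $S^\pm$ and the Neumann-type condition \eqref{con-trunc} on $\LL$.

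The linear problem is then solved by the Chen--Deng theory for mixed problems with discontinuous coefficients in corner domains. This gives existence in the weighted spaces $C^{2,\ga}_{(-\ga;\PP)(-1-\ga,-\ga;\GC)}$, with $C^{1,\ga}$ regularity up to the interface away from its endpoints. Here $\ga$ must be taken small, below the corner exponents at $\O$, $P^\pm$ and $P^0$. This is where \eqref{TC} and the truncation condition enter.

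\textbf{Main obstacle.} I expect the hardest step to be uniqueness and solvability of the linear problem at the corner $\O$, where $S^+$, $S^-$ and $\GC$ meet. Since \eqref{con-trunc} fixes $w+\tau^- p$ only up to a constant, the linear problem has a one-dimensional freedom. I would fix it by an additional constant determined from the compatibility at $\O$, whose solvability is exactly $\tau^+\ne\tau^-$. The corner exponents are computed from the piecewise-constant coefficients together with the oblique directions $\tau^\pm$. \eqref{TC} is used to exclude the exponent $0$ (a constant kernel) and to give a positive lower bound for the corner exponents. For the linear problem, I would obtain uniqueness by an energy or maximum-principle argument on the divergence-form equation.

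With the linear estimate in hand, the remaining steps are standard. We solve the transport equations for $(u_1,\rho)$ from the data on $S^\pm$; this yields $C^\ga$ regularity preserved along $\GC$. We update $\sh^\pm$ from the ODEs, where integrating gains one derivative and yields the $C^{3,\ga}$ weighted bound. We recover $\gz$ by mapping back to the $x$-variables. Together these define a map on a small ball $\|U-U_0\|^*\le C\ve$. This map maps the ball into itself by the quadratic smallness of the nonlinear terms, and it is continuous in a weaker norm, so by the Schauder fixed-point theorem we obtain existence with the bound \eqref{est-U1}.

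For uniqueness, we take two solutions satisfying \eqref{est-unique-U} and subtract them. The difference satisfies the linear problem with right-hand side bounded by $C\ve$ times the difference. The difference is measured in a norm one derivative lower, and the extra regularity \eqref{con-unique-UI} of $U^I$ is what controls the terms coming from the different shock positions. Hence the difference vanishes for $\ve$ small, i.e., the map is a contraction in the lower norm.
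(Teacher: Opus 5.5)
\paragraph{Verdict: the linear step has a genuine gap.} Your outer architecture matches the paper: Lagrangian coordinates, reduction of the Rankine--Hugoniot conditions to $w+\tau^\pm p=g^\pm$ plus ODEs for $\sh^\pm$, transport of $\CA,\BB$, a Schauder fixed point, and a contraction for uniqueness. The linear step, however, is the heart of the proof, and as you set it up it does not go through.

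\paragraph{Why eliminating $w$ fails.} With $p$ as the unknown, every boundary condition becomes a derivative condition. The condition $w+\tau^\pm p=g^\pm$ can only be imposed after differentiating along $S^\pm$ and substituting the first-order system. The truncation condition becomes an oblique condition $b^{-1}\partial_{z_1}p+\tau^-\partial_{z_2}p=\dots$ on $\LL$. You are therefore facing a purely oblique derivative problem, with a transmission interface $\II$, in a domain with corners at $\O$ and $P^\pm$, and with the interface meeting $\LL$ at $P^0$. This has several consequences.
\begin{itemize}
\item Constants always lie in its kernel. So \eqref{TC} cannot ``exclude the exponent $0$''; in your formulation it can only be used afterwards to fix the two constants in $(w,p)$.
\item There is no comparison principle.
\item Neither uniqueness argument you propose works. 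The energy identity produces boundary terms $\int_{\partial\DD}p\,(a\nabla p\cdot n)$ that an oblique, non-conormal condition does not control. The maximum-principle argument cannot exclude an extremum at $\O$ or $P^\pm$, where two oblique conditions meet and Hopf's lemma is unavailable.
\item Whether the kernel is only the constants, and whether a compatibility condition appears that you would have no free parameter left to satisfy, depends on corner exponents you never compute. Citing \cite{ChenDeng} does not replace this analysis: in the paper it enters only through local estimates and local solvability near $\II$.
\end{itemize}

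\paragraph{The missing idea: the choice of unknown.} Set $v_1=w+\tau^+p$ and $v_2=w+\tau^-p$; this change is invertible exactly because of \eqref{TC}. Eliminate $v_1$ by cross-differentiating the two first-order equations. Then $v_2$ satisfies $(a^{ij}(v_2)_{z_i})_{z_j}=(F^i)_{z_i}$ with piecewise constant coefficients. Its natural transmission conditions on $\II$ are exactly the contact conditions: continuity of $v_2$, and continuity of the conormal flux $a^{i2}(v_2)_{z_i}-F^2=\partial_{z_1}v_1$.
\begin{itemize}
\item On $S_0^-$ the condition is now Dirichlet, $v_2=g^-$.
\item The truncation condition says $v_2$ is constant on $\LL$, and continuity at $P^-$ forces that constant to be $g^-(l)$. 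So the ``one-dimensional freedom'' you worry about never arises.
\item Only $S_0^+$ carries an oblique condition, $D_{\nnu}v_2=g_5$, obtained by differentiating $v_1=g^+$ along $S_0^+$. Here \eqref{TC} is what makes $\nnu$ non-tangential.
\end{itemize}

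\paragraph{How the paper then closes the linear problem.}
\begin{itemize}
\item The Dirichlet data on $S_0^-\cup\LL$ give a comparison principle, hence uniqueness.
\item The $C^0$ bound comes from explicit barriers placed at $\O$, $P^0$ and $P^\pm$, such as $r^\ga\sin(\ga^{1/2}\theta+\pi/4)$ and $r^\ga\sin(\ga^{1/4}\theta+\pi/4)$ on the two sides of $\II$. The two frequencies are chosen so that the flux jump across $\II$ has the right sign.
\item Existence follows from Perron's method together with local solvability.
\item Finally, $v_1$ is recovered by integrating in $z_2$ from $S_0^+$.
\end{itemize}
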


The main theorem states that, under the transversality condition
\eqref{TC} and the subsonicity condition \eqref{con-subsonic}, the direct
Mach configuration is locally stable: sufficiently small perturbations of
the incoming flow give rise to a nearby Mach configuration, which is
unique in the class of solutions satisfying the a priori estimate
\eqref{est-U1} (for incoming flows that are small in the stronger norm
\eqref{con-unique-UI}). The
proof is based on the following strategy. First, introducing the stream
function and passing to Euler--Lagrange coordinates, the streamlines and
the slip line are straightened; in particular, the contact discontinuity,
which is unknown in the original variables, becomes a fixed straight
boundary segment, so that the free boundary problem is converted into a
problem on a fixed domain. In these coordinates, the Euler system is
reduced to the Bernoulli law, the conservation of entropy along
streamlines, and a first-order elliptic system for the flow direction $w$
and the pressure $p$; see Section~\ref{sec-lagrange}. The
hyperbolic--elliptic coupling is then resolved by eliminating the
hyperbolic variables through the Rankine--Hugoniot conditions, which leads
to a mixed boundary value problem, for a single elliptic equation, with a
Dirichlet condition on the Mach stem, a Dirichlet condition on the
truncation segment $\LL$, and an oblique derivative condition on the
reflected shock; see Section~\ref{sec-linear}. In Section~\ref{sec-mbvp},
the solvability of this mixed boundary value problem is established by the
Perron method. The a priori estimates rely on weighted H\"older spaces
adapted to the corners and to the contact discontinuity; the estimates
across the contact discontinuity, which is attached to the corners, are
obtained from the results of Chen and Deng \cite{ChenDeng} for elliptic
equations with discontinuous coefficients in domains with corners.
Finally, in Section~\ref{sec-mach}, a Schauder fixed point argument yields
the existence stated in Theorem~\ref{thm1}, while uniqueness requires
higher regularity of the incoming flow.

The rest of the paper is organized as follows. In
Section~\ref{sec-lagrange}, we reformulate Problem~A in Lagrangian
coordinates and derive the reduced elliptic system. In
Section~\ref{sec-linear}, we linearize the problem and derive the mixed
boundary value problem. In Section~\ref{sec-mbvp}, we establish the
existence and uniqueness of solutions to the mixed boundary value problem
by the Perron method. In Section~\ref{sec-mach}, we complete the proof of
the main theorem by a fixed point argument.

\section{Reformulation of {\bf Problem A} in Lagrangian coordinates} \label{sec-lagrange}

Let $U$ be a subsonic solution in a simply connected domain $G$. A stream function $\psi$ is uniquely determined in $G$ by
\begin{align}
	&\psi_{x_1}= -\rho u_2, \quad \psi_{x_2} = \rho u_1,\label{stream}
\end{align}
with	$\psi(\O) =0$.

Define the following coordinate
transformation of Euler-Lagrange type
\begin{equation}\label{def-coord}
	\left\{
	\begin{array}{lll}
		y_1=x_1,\\
		y_2= \psi(x_1, x_2).
	\end{array}
	\right.
\end{equation}
Since $\psi$ is constant along the streamlines, the transformation $(x_1,x_2)\mapsto (x_1,\psi(x_1,x_2))$ straightens the streamlines; in particular, the slip line $\GC$, along which the velocity is tangential, is itself a streamline and is therefore mapped onto the fixed segment $\II=\{y_2=0\}$. Thus the contact discontinuity, which is an unknown free boundary in the $\xx$-coordinates, becomes a fixed boundary in the $\yy$-coordinates. This transformation is globally invertible and bi-Lipschitz in the whole domain under consideration, as long as $\rho u_1>0$ and $\rho u_1$ is bounded above there; recall that this will be ensured by the small perturbation framework.

In the new coordinates $\yy=(y_1,y_2)$, we still denote the new functions $ U(\xx(y_{1},y_{2}))$ by $U(\yy)$, and the Euler equations \eqref{Euler1}--\eqref{Euler3} become
\begin{align}\label{eqn-euler1}
	&\Big(\frac{1}{\rho u_1} \Big)_{y_1}
	-\Big(\frac{u_2}{u_1} \Big)_{y_2}= 0,\\[1mm]
	&\Big(u_1 + \frac{p}{\rho u_1}\Big)_{y_1}
	-\Big(\frac{p u_2}{u_1}\Big)_{y_2}= 0, \label{eqn-euler2}\\[1mm]
	&(u_2)_{y_1} + p_{y_2}= 0,\label{eqn-euler3} \\[1mm]
	& \Big(\frac{1}{2}|\uu|^2 +\frac{\gc p}{(\gc-1)\rho}\Big)_{y_1}=0.
	\label{eqn-euler4}
\end{align}

R-H conditions on $S^{\pm}$ are the following:
\begin{align}\label{con-RH1}
	(s^{\pm})'(y_1)\left[\frac{1}{\rho u_1}\right]&=-\left[\frac{u_2}{u_1} \right] ,\\
	(s^{\pm})' (y_1)	\left[ u_1 + \frac{p}{\rho u_1}\right]&=-\left[\frac{p u_2}{u_1}\right]
	,
	\label{con-RH2}\\
	(s^{\pm})' (y_1)	[\,u_2 \,]&= [\,p\,] ,
	\label{con-RH3}\\
	\left[ \frac{1}{2}|\uu|^2 + \frac{\gc p}{(\gc-1)\rho}\right]&= 0.
	\label{con-RH4}
\end{align}
To keep notational simplicity, we retain the same notations for $ \GO^{I-}_0 $, $\GO^{I+}_0 $, $S^+$, $S^-$, etc., after the coordinate transformation. $ \GO^+$, $ \GO^-$ are domains bounded by $S^+$, $S^-$, $\LL$ and $\II$. One should know that hereafter, these quantities are in $\yy$-coordinates and the slopes $k^+,k^-$ are also different from those in Section \ref{sec-setup}.

Condition \eqref{con-trunc} should be rewritten correspondingly:
\begin{align}
	\tfrac{\po}{\po y_2}(w+\tau^- p ) = 0. \label{con-truncy}
\end{align}

Then {\bf Problem A} in Section \ref{sec-setup} is restated in $\yy$-coordinates as follows.
\begin{probb}
	Given the same Mach configuration $U^{I-}_0$, $U^{I+}_0$, $U^-_0$, $U^+_0$ as in {\bf Problem A}. Let $U^{I-}, U^{I+}$ be small perturbations from $U^{I-}_0, U^{I+}_0$ in the domains $ \GO^{I-}_0 $ and $\GO^{I+}_0 $, respectively.

	Find the reflected shock $S^+$ in $\GO^{I+}_0 $, the Mach stem $S^-$ in $ \GO^{I-}_0$, and the downstream solution $ U^+$, $U^-$, so that the following are satisfied:

	\begin{enumerate}
		\item $U^+$ and $U^-$ are solutions to Euler equations \eqref{eqn-euler1}--\eqref{eqn-euler4} in $ \GO^+$ and $ \GO^-$ respectively.
		\item Shock conditions \eqref{con-RH1}--\eqref{con-RH4} hold on $S^+$ and $S^-$.
		\item Conditions \eqref{RH7} and \eqref{con-w} hold on $\II$.
		\item Condition \eqref{con-truncy} is satisfied on $\LL$.
		\item Downstream solution $U^{\pm}$, the shocks $S^{\pm}$ and the contact discontinuity $\GC$ are all small perturbations of the background states $U^{\pm}_0$, $S_0^{\pm}$, and $\II$, respectively.
	\end{enumerate}
\end{probb}

Theorem \ref{thm1} is restated as follows.
\begin{theorem} \label{thm2}
	For suitably small $\ga \in (0,1)$, depending on $\Xi$, there exists a small positive constant $\ve$, when \eqref{con-UI} holds, there is a solution to {\bf Problem B}, satisfying the following estimate:
	\begin{align}
		\|U - U_0\|^*_{\GO} +\|s^{\pm}-s_0^{\pm}\|_{3,\ga;(0,l)}^{(-\ga-1;\{0,l\})}\le C\|U^I  - U^I_0\|_{2,\ga;\GO^I_0 }, \label{est-U}
	\end{align}
	where $C$ is a positive constant depending on $\Xi$ and $\ga$.

If we further assume that \eqref{con-unique-UI}
 and \eqref{est-unique-U} hold, then the solution to {\bf Problem B} satisfying \eqref{est-unique-U} is unique.
\end{theorem}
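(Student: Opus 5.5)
The plan is to follow the strategy outlined after Theorem \ref{thm1}: reduce Problem B to a fixed point problem for the shock positions and the elliptic unknowns $(w,p)$, decoupled from the transport unknowns. In the Lagrangian coordinates, equation \eqref{eqn-euler4} and the entropy equation (a combination of \eqref{eqn-euler1}--\eqref{eqn-euler4}) say that the Bernoulli quantity $B$ and the entropy $A=p/\rho^\gc$ are constant along $y_2=\text{const}$. Hence they are determined by their values on $S^\pm$, and those values come from the R-H conditions \eqref{con-RH1}--\eqref{con-RH4} once $(w,p)$ and $(s^\pm)'$ are known there. Given $(w,p,A,B)$, we recover $(u_1,\rho)$ algebraically; subsonicity makes this branch smooth near $U_0^\pm$. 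Equations \eqref{eqn-euler1} and \eqref{eqn-euler3} then become a first order system for $(w,p)$ of Cauchy--Riemann type in each of $\GO^\pm$, which is elliptic because $M_0^\pm<1$. Its coefficients depend on $(A,B)$, which are only $C^\ga$ across $\II$ and may jump there.

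Next we linearize around the background state. On $S^-$ and $S^+$ we eliminate $(s^\pm)'$, $u_1$ and $\rho$ from the four R-H conditions. This leaves one boundary condition relating $(w,p)$ on each shock, of the form $w+\tau^\pm p=g^\pm$ to leading order, plus an ODE giving $(s^\pm)'$. The first order system is equivalent to a divergence form equation for a potential $\phi$ with $\nabla\phi\sim(p,w)$ suitably scaled, and its coefficients are discontinuous across $\II$. The conditions \eqref{RH7} and \eqref{con-w} on $\II$ are then built in as the transmission conditions of the weak formulation, namely continuity of $\phi$ and of the conormal flux. The condition on $S^-$ becomes a Dirichlet condition after integrating along $S^-$ from $\O$. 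The condition on $S^+$ becomes an oblique derivative condition. Condition \eqref{con-truncy} on $\LL$ becomes a Dirichlet condition up to an additive constant. This constant, and the constant of integration on $S^-$, are fixed using \eqref{TC}: the transversality $\tau^+\neq\tau^-$ is exactly what makes the obliqueness at the corner $\O$ nondegenerate and lets the two shock conditions be compatible at the triple point.

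For the linear mixed problem, we would obtain existence via the Perron method as announced for Section~\ref{sec-mbvp}. We would build barriers near the corners $\O,P^\pm,P^0$, using the small angle exponent allowed by choosing $\ga$ small. We would then prove weighted Schauder estimates in the norms \eqref{def-fGO}, invoking \cite{ChenDeng} across $\II$ near the corners $\O$ and $P^0$, where the contact line meets the boundary. The estimates give $C^{1,\ga}$ up to $\II$ for $(w,p)$ and $C^\ga$ at the corners. This is the main obstacle. The corner exponents at $\O$, where the Dirichlet condition meets the oblique condition through a discontinuous-coefficient interface, must exceed $\ga$. We would verify this by computing the homogeneous solutions in the sector with the piecewise constant background coefficients. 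Their exponents depend on the corner angles, on $\tau^\pm$, and on the oblique angle, and \eqref{TC} rules out exponent zero. The condition \eqref{con-trunc} on $\LL$ is chosen so that $P^\pm$ and $P^0$ also have a positive exponent.

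Finally, we define a map on a closed convex set $\{\|U-U_0\|^*_\GO+\|s^\pm-s^\pm_0\|\le C\ve\}$. We freeze the coefficients and the nonlinear remainders, solve the transport equations for $(A,B)$, solve the linear mixed problem for $(w,p)$, and update $s^\pm$ by integrating the ODEs, which gains one derivative and yields the $C^{3,\ga}$ norm. The weighted estimates show the map sends this set into itself for small $\ve$, and compactness comes from the embedding into a weaker weighted space. Schauder's theorem then gives existence with \eqref{est-U}. For uniqueness, we take the difference of two solutions and estimate it in a norm with one derivative less. The difference equations contain terms like differences of $(A,B)$ composed with different shock positions, which cost one derivative of $U^I$; this is why \eqref{con-unique-UI} is needed. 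The linear estimate plus smallness gives a contraction, so the difference is zero.
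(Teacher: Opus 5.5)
There is a genuine gap in your reduction to a scalar problem. You take as unknown a potential $\phi$ with $\nabla\phi=M(\hat w,\hat p)^\top$, where $M$ is a constant invertible matrix in each of $\DD^\pm$. You then assert two things: that the Mach-stem condition $\hat w+\tau^-\hat p=g^-$ becomes Dirichlet data on $S_0^-$ after integration, and that \eqref{con-truncy} becomes Dirichlet data (up to constants) on $\LL$.

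In $\DD^-$ these two assertions are incompatible. The first needs $(1,k^-)\cdot\nabla\phi$ to be a multiple of $\hat w+\tau^-\hat p$. The second needs $\partial_{z_2}\phi$ to be a multiple of the same scalar. Since $(1,k^-)$ and $(0,1)$ span $\R^2$, both components of $\nabla\phi$ would then be multiples of one scalar, so $M$ would have rank one.

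Hence, for any choice of potential, at least one of $S_0^-$, $\LL$ carries an oblique condition, for instance $\ell\cdot\nabla\phi=g^-(l)$ on $\LL$. This gives a different mixed problem. If $S_0^-$ is kept Dirichlet, an oblique--oblique corner appears at $P^+$, and $\II$ meets an oblique boundary at $P^0$. Neither your barriers nor your corner-exponent discussion, which assume Dirichlet data on $\LL$, cover that problem. Relatedly, \eqref{TC} is not what fixes the additive constants: in the correct formulation there is no integration along $S_0^-$, and the constant on $\LL$ is forced by continuity at $P^-$.

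The missing idea is to use the Mach-stem combination itself as the unknown. Set $\vv=T(\hat w,\hat p)^\top$, i.e.\ $v_1=\hat w+\tau^+\hat p$ and $v_2=\hat w+\tau^-\hat p$. Cross-differentiating \eqref{eqn-v1z1}--\eqref{eqn-v1z2} eliminates $v_1$ and gives the divergence-form equation \eqref{eqn-v2} for $v_2$. Here $v_1$ plays the role of a conjugate function, and $v_2$ is not a potential. With this choice:
\begin{itemize}
\item $v_2=g^-$ on $S_0^-$ and $v_2\equiv g^-(l)$ on $\LL$ are genuine Dirichlet conditions.
\item Differentiating $v_1=g^+$ along $S_0^+$ and using the equations gives the oblique condition \eqref{con-g5}.
\item Since $T$ is the same on both sides of $\II$, the contact conditions become $[v_2]=0$ and $[v_1]=0$. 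The latter is exactly the conormal flux condition $[a^{i2}(v_2)_{z_i}-F^2]=0$, because \eqref{eqn-v1z1} reads $(v_1)_{z_1}=a^{i2}(v_2)_{z_i}-F^2$.
\item $v_1$ is recovered afterwards from \eqref{eqn-v1int}.
\end{itemize}

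This is also where \eqref{TC} really enters. It makes $T$ invertible, and it makes \eqref{eqn-v2} elliptic, since both $a^{11}$ and $a^{22}$ are multiples of $\tau^+-\tau^-$. It also keeps $\nnu$ non-tangential along all of $S_0^+$, because $\nu_1k^+-\nu_2=-((k^+)^2+a^+b^+)(\tau^+-\tau^-)$.

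With this unknown, the rest of your plan goes through essentially as in the paper: Perron's method, barriers for small $\ga$, estimates from \cite{ChenDeng} near $\II$, the Schauder fixed point, and the contraction under \eqref{con-unique-UI}. For the barriers you do not need to compute corner exponents: the paper checks the flux jump on $\II$ via Proposition~\ref{weakineq}.
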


Rewrite \eqref{eqn-euler1}--\eqref{eqn-euler4} in the non-divergence form
for $U=(\rho,\uu,p)^\top$:
\begin{equation}
	A_1(U) U_{y_1} + A_2(U) U_{y_2} =0, \label{euler-nondiv}
\end{equation}
where
\begin{align*}
	A_1(U) &=
	\begin{bmatrix}
		-\frac{1}{\rho^2 u_1}&-\frac{1}{\rho u_1^2} & 0 &0\\[1.5mm]
		-  \frac{ p}{ \rho^2 u_1}	&1 -  \frac{ p}{ \rho u_1^2} & 0 &  \frac{1}{\rho u_1}   \\[1.5mm]
		0& 	0  & 1& 0\\[1mm]
		- \frac{\gc p}{(\gc - 1) \rho^2}&	u_1 & u_2 & \frac{\gc}{(\gc - 1) \rho}
	\end{bmatrix},\\[1mm]
	A_2(U) &=
	\begin{bmatrix}
		0&	\frac{u_2}{ u_1^2} & -\frac{1}{u_1} &0 \\[1.5mm]
		0&	\frac{ pu_2}{ u_1^2} & - \frac{p}{u_1} & - \frac{u_2}{ u_1}   \\[1.5mm]
		0&	0  & 0& 1 \\[1mm]
		0& 0 &0 & 0
	\end{bmatrix}.
\end{align*}

Solving $\det (\gl A_1 -A_2)=0$ for $\gl$ gives four eigenvalues:
\begin{eqnarray*}
	&& \gl_1 =\gl_2 =0,\\
	&& \gl_{3,4} \equiv \gl_{\pm} =-\frac{c\rho}{c^2-u_1^2}\big(c u_2 \mp u_1 \sqrt{c^2 -q^2} i\big),
\end{eqnarray*}
where $q= \sqrt{u_1^2 + u_2^2} < c$ in the subsonic region.
The corresponding left-eigenvectors are
\begin{eqnarray*}
	l_1 & =& (0,0,0,1),\\
	l_2 & =&(-p u_1, u_1, u_2, -1),\\
	l_{3,4} &=&((\frac{\gc p^2}{(\gc -1) \rho u_1} - \frac{p u_1}{ \gc -1 })\gl_{3,4} + \frac{\gc p^2 u_2}{(\gc - 1) u_1},\\
	&&\,\,-(u_1+\frac{\gc p}{(\gc -1) \rho u_1})\gl_{3,4}-\frac{\gc p u_2}{(\gc -1)u_1}, \frac{\gc p}{\gc -1} - u_2 \gl_{3,4},  \gl_{3,4}).
\end{eqnarray*}

Then left-multiplying the Euler system \eqref{euler-nondiv} by $l_i$, $i=1, \cdots, 4$, leads to
Bernoulli's law:
	\begin{equation}\label{eqn-bernoulli}
		\frac{1}{2}q^2 + \frac{\gc p}{(\gc-1)\rho} = \BB(y_2),
	\end{equation}
conservation of entropy along smooth stream lines:
	\begin{equation} \label{eqn-entropy}
		\frac{p}{\rho^\gc} =\CA(y_2),
	\end{equation}
and the first-order elliptic system for $(w,p)$:
	\begin{align}
		&D_R w + \mu D_I p  =  0,   \label{eqn-wp1}\\
		& D_I w - \mu  D_R p  = 0, \label{eqn-wp2}
	\end{align}
	where
	\begin{align}
	&  D_R = \po_{y_1} + \gl_R \po_{y_2}, \quad  D_I = \gl_I\po_{y_2},
	  \quad \mu = \frac{\sqrt{c^2-q^2}}{c\rho u_1^2},\\
	  &	\gl_R =-\frac{c^2 \rho u_2}{c^2 - u_1^2 },  \quad \gl_I =  \frac{c\rho u_1 \sqrt{c^2 -q^2}}{c^2 - u_1^2}.\label{exp-drlambda}
	\end{align}

For positive $\CA,\BB,u_1$, by \eqref{eqn-bernoulli} and \eqref{eqn-entropy}, $\rho, u_1,u_2$ can be expressed in terms of $w, p$:
\begin{align}\label{eqn-u1}
	\rho= \left(\frac{p}{\CA}\right)^{1/\gc}, &&	u_1 = \frac{ \sqrt{2\BB - \frac{2\gc p }{ (\gc -1 )\rho }}}{\sqrt{1+ w^2}}, && u_2= wu_1.
\end{align}
Therefore, \eqref{eqn-euler1}--\eqref{eqn-euler4} are reduced to the elliptic system \eqref{eqn-wp1}, \eqref{eqn-wp2} of $w,p$, provided that we know quantities $\CA,\BB$. We point out that the Bernoulli quantity $\BB$ and the entropy quantity $\CA$ are functions of $y_2$ alone; they are determined by the values of the incoming flow that are transported along the streamlines, and they are the only data that couple the elliptic system to the hyperbolic part of the flow.

\section{Linearization} \label{sec-linear}
Let $\DD$ be the open domain bounded by $S_0^+, S_0^-, \LL$, and $\DD^+,\DD^-$ are subdomains of $\DD$ divided by $\II$.
We still denote by $P^+, P^0,P^-$ the intersection points of $\LL$ with $S_0^+, \II, S_0^-$, respectively.
Define coordinate transformation $\TT^{\ess}:\GO \to \DD$. Given $\zz \in \DD$, define
$$\yy= (\TT^{\ess})^{-1} \zz = (z_1, s(\zz)),$$
where $s$ is defined piecewise in $\DD^{\pm}$ as follows. 

\medskip
\noindent{\bf The extension of the shock functions $\sh^{\pm}(z_1)$ to $\sh(\zz)$.}
For $\zz \in \DD^+$, set
\begin{align}\label{exp-extend}
 \sh(\zz) &:=\int_{\R}  \sh^+(z_1 +  c_0|k^+ z_1 -z_2|(t+\eta (\tfrac{z_1}{c_0})-\eta (\tfrac{z_1-l}{c_0})))	\tfrac{1}{c_0}\xi(\tfrac{t}{c_0})\, \mathrm{d}t ,\\
	 s (\zz) &:=z_2+\eta(4(\tfrac{z_2}{k^+z_1}-1)) \sh(\zz),\label{def-s}
\end{align}
where $c_0>0$ is chosen so small that \eqref{con-c0} below holds, $\xi(t)$ is a smooth mollifier satisfying
\begin{align*}
	\xi(t)\geq 0, \quad \mathrm{supp}\,  \xi(t) \subset [-1,1] ,  \quad \int_{\mathbb{R}}\xi(t)\,{\rm d}t=1,
\end{align*}
and $\eta \in C_c^{\infty}(\mathbb{R})$ is a cutoff function with the following properties:
\begin{align*}
	\begin{cases}
		0 \le \eta(t)\le 1\quad &\mbox{for $t\in \R$}, \\
		\eta(t)=1 &\mbox{for $t\in [-1,1]$},\\
		\eta(t)=0 &\mbox{for $t\in (-\infty, -2]\cup [2,\infty)$}.
	\end{cases}
\end{align*}
\begin{remark}\label{rem-extend}
In \eqref{exp-extend}, the two cut-off terms are needed to keep the argument of $\sh^+$ inside the interval $(0,l)$: the combination $S(\zz):=\eta(\tfrac{z_1}{c_0})-\eta(\tfrac{z_1-l}{c_0})$ equals $+1$ when $z_1\le c_0$ and $-1$ when $z_1\ge l-c_0$, so that the displacement $c_0|k^+z_1-z_2|S(\zz)$ shifts the argument inward near the end points $0$ and $l$, while the contribution of the mollifier itself is only of order $c_0^2|k^+z_1-z_2|$. More precisely, $c_0$ should be chosen so small that
	\begin{align} \label{con-c0}
		c_0\le\frac l8,\qquad
		c_0\max(1,k^+,-k^-)\le \frac12,\qquad
		c_0\max(k^+,-k^-)\,l\le 1 ;
	\end{align}
then the argument of $\sh^+$ lies in $(0,l)$ for every $\zz\in\DD^\pm$ and every $t$ in the support of $\xi(\cdot/c_0)$. Indeed, writing $A(\zz,t):=z_1+c_0|k^+z_1-z_2|(t+S(\zz))$ and $d:=|k^+z_1-z_2|\le k^+z_1$, one has $A\ge z_1-c_0d(1+c_0)\ge z_1(1-c_0k^+(1+c_0))>0$, while $A\le z_1+c_0d(1+c_0)\le 4c_0\le \frac l2$ when $z_1\le 2c_0$, $A\le z_1+c_0^2d\le z_1+c_0<l$ when $2c_0\le z_1\le l-2c_0$ or $l-2c_0\le z_1<l-c_0$, and $A\le z_1\le l$ when $z_1\ge l-c_0$.
\end{remark}

By Lemma 2.3 in \cite{GH}, we can verify that
\begin{align}
\| \sh \|_{3,\ga;\DD^+ }^{(-1-\ga;\{\O,P^+\})} \le C \| \sh^+ \|_{3,\ga;(0,l)}^{(-1-\ga; \{0,l\})}.\label{est-extend}
\end{align}
Definition \eqref{def-s} implies that $s$ is Lipschitz near $\O$, and $s(\zz) \equiv z_2$ near $\II$.

In the same manner, we define $s(\zz)$ for $\zz \in \DD^-$ and obtain similar estimate to \eqref{est-extend}.

Denote
\begin{align*}
& U^{s}(\zz) := U((\TT^\ess)^{-1}(\zz)),
 \quad f^s :=f(U^s),
\end{align*}
where $f$ is any function of $U$, such as $\gl_I, \gl_R$ in \eqref{exp-drlambda}.

  Hence, equations \eqref{eqn-wp1}, \eqref{eqn-wp2} become the following equations in $\zz$-coordinates:
\begin{align}
	D^s_R w^s + \mu ^s D^s_I p^s& =   0,   \label{eqn-wp-z1}\\
	 D^s_I w^s - \mu ^s D^s_R p^s & =  0,\label{eqn-wp-z2}
\end{align}
where
\begin{align*}
D^s_R&= \partial_{z_1} +   \tfrac{\gl_R^s -s_{z_1}}{s_{z_2}} \partial_{z_2} ,&
D^s_I &= \tfrac{\gl_I^s}{s_{z_2}}  \partial_{z_2}.
\end{align*}
Rewrite \eqref{eqn-wp-z1} \eqref{eqn-wp-z2} so that the coefficients of the derivative terms are piecewise constant:
\begin{align}
	\partial_{z_1} w^s + a\, \partial_{z_2} p^s& =   f_1,   \label{eqn-w-z1}\\
	\partial_{z_1} p^s - b \, \partial_{z_2} w^s & =   f_2,\label{eqn-p-z1}
\end{align}
where
\begin{align}
&a  =  \mu ^*\gl^*_I= \frac{1}{u_{10}},
\quad b  =  \frac{\gl^*_I}{\mu^*}= \frac{c_0^2\rho_0^2u_{10}^3}{c_0^2-u_{10}^2},\\
&f_1(s,U^s)  = ( D^*_R -D^s_R) w^s +(\mu ^* D^*_I-\mu ^s D^s_I)p^s, \label{def-f1}\\
&f_2(s,U^s)  = (D^*_R- D^s_R)p^s +\left( \frac{1}{\mu^s}D^s_I -\frac{1}{\mu^*}D^*_I\right) w^s,\label{def-f2}
\end{align}
and the superscripts $*$ mean replacing variables $U^s$ and $s$ with the background solution $U_0$ and $s^*(\zz)\equiv z_2$, respectively.
Set
\begin{align*}
&\bar{U}^s= (w^s, p^s, \rho^s),&& \Ubh^s= (\wh^s, \ph^s, \rhoh^s) = \Ub^s - \Ub_0, \\
&U^{I,s}(\zz)=  U^I((\TT^s)^{-1}(\zz)),&& \BB^s= \BB\circ(\TT^\ess)^{-1}|_{S_0^+\cup S_0^- }.
\end{align*}
 Hereafter, we will drop superscripts $s$ for the variables defined in the downstream domain $\DD$ for simplicity. Since we have obtained the value of $\BB(z_2)$, we can express $u_1$ in terms of $\Ub$ by \eqref{eqn-u1}. Thus, all the functions of variables $U$ now can be considered as functions of $\Ub$. Given $\Ub$, we linearize the equations into:
\begin{align}
	\partial_{z_1} \wht + a\, \partial_{z_2} \pht& =   f_1(\BB,\sh,\Ubh),   \label{eqnlhat-w-z1}\\
	\partial_{z_1} \pht - b \, \partial_{z_2} \wht  & =   f_2(\BB,\sh,\Ubh).\label{eqnlhat-p-z1}
\end{align}
We rewrite \eqref{con-RH3} into
\begin{align} \label{eqn-sigmaprime}
	(s^{\pm})' (z_1) = \frac{[p]}{[ u_1 w ]}( z_1, k^{\pm} z_1),
\end{align}
 Then we use \eqref{eqn-sigmaprime} to eliminate $	(s^{\pm})'$ in conditions \eqref{con-RH1}, \eqref{con-RH2} and obtain
\begin{align}
	G_1(U^{I,s}, \bar{U})& :=  [p] \left[ \frac{1}{\rho u_1 }\right] + [w] [u_1 w] = 0, \label{con-G1}\\
	G_2(U^{I,s}, \bar{U})&  :=   [p] \left[ u_1 + \frac{p}{\rho u_1 }\right] + [pw] [u_1 w] = 0.\label{con-G2}
\end{align}

We linearize the conditions above as
\begin{equation}
	\grad_{\bar{U}} G_i(U^I_0, \bar{U}_0)\cdot \tilde{\Ubh}= 	\grad_{\bar{U}} G_i(U^I_0, \bar{U}_0)\cdot \Ubh -G_i(U^{I,s}, \bar{U}), \label{con-lin-gi}
\end{equation}
denoted by
\begin{equation}
	b_{i1}  \wht  + b_{i2}\pht  +b_{i3} \rhoht  =g_i (U^{I,s}, \bar{U}), \label{con-gi}
\end{equation}
where $i=1,2$,
\begin{align}
	(b_{i1},b_{i2},b_{i3}) &=  \grad_{\bar{U}} G_i(U^I_0, \bar{U}_0),\\
	g_i (U^{I,s}, \bar{U})&=  \grad_{\bar{U}} G_i(U^I_0, \bar{U}_0)\cdot \Ubh -G_i(U^{I,s},\bar{U}). \label{eqn-gi}
\end{align}
Using the two conditions \eqref{con-gi} ($i=1,2$) to eliminate $\rhoht $ leads to
\begin{equation} \label{con-g30}
	(b_{11}b_{23} - b_{21} b_{13})\wht + (b_{12}b_{23} - b_{22}b_{13})\pht   =b_{23}g_1 -b_{13}g_2.
\end{equation}
A direct calculation gives
\begin{eqnarray*}
&&b_{13}= (p_0^I - p_0) \left(  \frac{p_0}{\rho_0^2 u_{10}} +   \frac{\gc p_0}{(\gc -1)\rho_0^3 u_{10}^3}   \right)  < 0,\\
&	&b_{11}b_{23} - b_{21} b_{13} \\
&	=&  ( p^I_0 - p_0)  u_{20}^I  \left( \frac{p_0}{(\gc -1)\rho_0^2 u_{10}} + \frac{p_0^I}{u_{10}^I}\left(\frac{1}{\rho_0^2 }  +  \frac{\gc p_0}{(\gc -1)\rho_0^3 u_{10}^2}  \right) \right)\\
&\ne&  0.
\end{eqnarray*}
Therefore, condition \eqref{con-g30} becomes
\begin{equation} \label{con-g3}
\wht  + \tau\pht  =g_3,
\end{equation}
where
\[
\tau=\frac{b_{12}b_{23} - b_{22}b_{13}}{b_{11}b_{23} - b_{21} b_{13}}, \quad
g_3 = \frac{b_{23} g_1 - b_{13} g_2}{b_{11}b_{23} - b_{21} b_{13}}.
\]
We note that this $\tau$ coincides with the slope used in the transversality condition \eqref{TC}: equation \eqref{con-g3} says that the perturbed downstream state moves along the tangent direction of the corresponding shock polar loop at the intersection point, so that \eqref{TC} is exactly the requirement $\tau^+\ne\tau^-$.
Rewrite condition \eqref{con-g3} into two conditions on $S^+,S^-$ separately:
\begin{align} \label{con-g+}
	\wht(z_1,k^+z_1)  + \tau^+\pht(z_1,k^+z_1)  =g^+(z_1),\\
		\wht(z_1,k^-z_1) + \tau^-\pht(z_1,k^-z_1) = g^-(z_1).\label{con-g-}
\end{align}

Condition \eqref{con-gi} for $i=1$ can be rewritten as
\begin{equation} \label{con-g4}
\rhoht =g_4 - b_2   \wht   -b_3  \pht ,
\end{equation}
where $ g_4 = \frac{g_1}{b_{13}}, b_2= \frac{b_{11}}{b_{13}}, b_3= \frac{b_{12}}{b_{13}} $.

Set
\begin{align}
&B_1 =	\begin{pmatrix}
		0 & a \\
		-b & 0
	\end{pmatrix},
 	T =	\begin{pmatrix}
1 & \tau^+ \\
1 & \tau^-	\end{pmatrix},   \vv=\begin{pmatrix}
v_1\\
v_2  	\end{pmatrix} = T\begin{pmatrix}
\wht\\
\pht	\end{pmatrix},  \label{def-vv}   \\
& B_2= TB_1T^{-1}= \begin{pmatrix}
\frac{ b\tau^+\tau^- +a}{\tau^+ -\tau^-} & \frac{- b(\tau^+)^2 -a}{\tau^+ -\tau^-} \\
\frac{ b(\tau^-)^2 + a}{\tau^+ -\tau^-} &\frac{ -b\tau^+\tau^- -a}{\tau^+ -\tau^-}
\end{pmatrix}.
\end{align}
Equations \eqref{eqnlhat-w-z1} \eqref{eqnlhat-p-z1} left-multiplied by $T$ become
\begin{align} \label{eqn-vv}
\vv_{z_1} +B_2\vv_{z_2} = T\ff.
\end{align}
Rewrite into
\begin{align} \label{eqn-v1z1}
(v_1)_{z_1} -  \tfrac{   b\tau^+\tau^- +a}{b(\tau^-)^2 +a} (v_2)_{z_1} -\tfrac{ab(\tau^+ -\tau^-) }{b(\tau^-)^2 +a} (v_2)_{z_2} &= \tfrac{ b\tau^-(\tau^- -\tau^+)}{b(\tau^-)^2 +a} f_1+     \tfrac{ a(\tau^+ -\tau^-) }{b(\tau^-)^2 +a} f_2  ,\\
(v_1)_{z_2} + \tfrac{  \tau^+-\tau^-}{b(\tau^-)^2 +a} (v_2)_{z_1} -\tfrac{b\tau^+\tau^- +a}{b(\tau^-)^2 +a} (v_2)_{z_2}&= \tfrac{ \tau^+-\tau^-}{b(\tau^-)^2 +a} f_1+     \tfrac{ \tau^- (\tau^+- \tau^- )  }{b(\tau^-)^2 +a} f_2.     \label{eqn-v1z2}
\end{align}

By operating $\po_{z_1}$ on \eqref{eqn-v1z2} and subtracting $\po_{z_2}$ on \eqref{eqn-v1z1}, the first component $v_1$ is eliminated, and $v_2$ is seen to satisfy the following divergence-form equation:
\begin{align} \label{eqn-v2}
(a^{ij}(v_2)_{z_i})_{z_j}  = (F^i)_{z_i},
\end{align}
 where
\begin{align}
&	a^{11} =  \tfrac{  \tau^+-\tau^-}{b(\tau^-)^2 +a},  a^{12} =  \tfrac{   b\tau^+\tau^- +a}{b(\tau^-)^2 +a} , 	a^{21} =-\tfrac{b\tau^+\tau^- +a}{b(\tau^-)^2 +a}, 	a^{22} =\tfrac{ab(\tau^+ -\tau^-) }{b(\tau^-)^2 +a},\label{def-aij}\\
& F^1=\tfrac{ \tau^+-\tau^-}{b(\tau^-)^2 +a} f_1+     \tfrac{ \tau^- (\tau^+- \tau^- )  }{b(\tau^-)^2 +a} f_2,  F^2= \tfrac{ b\tau^-(\tau^+ -\tau^-)}{b(\tau^-)^2 +a} f_1 -    \tfrac{ a(\tau^+ -\tau^-) }{b(\tau^-)^2 +a} f_2.\label{def-Fi}
\end{align}
Boundary conditions \eqref{con-g+} \eqref{con-g-} \eqref{con-truncy} render
\begin{align}
&	v_1|_{S^+_0} =  g^+(z_1),\label{con-v1}\\
&		v_2|_{S^-_0} =  g^-(z_1), \quad	v_2|_{\LL} =  g^-(l) .\label{con-v2}
\end{align}
Differentiating \eqref{con-v1} along $S^+_0$ and using equations \eqref{eqn-v1z2} \eqref{eqn-v1z1} leads to
\begin{align}
 D_{\nnu} v_2|_{S^+_0} = g_5,  \label{con-g5}
\end{align}
 where
\begin{align}
	&\nnu =(\nu_1,\nu_2)^\top,  \quad  D_{\nnu} = \nnu \cdot \grad,\\
&\nu_1	=  b^+\tau^+\tau^- +a^+ - k^+(\tau^+-\tau^-),\\
&\nu_2	= a^+b^+(\tau^+ -\tau^-)+k^+(b^+\tau^+\tau^- +a^+),\\
	&g_5=( b^+(\tau^-)^2 +a^+)((g^+)'-k^+F^1+F^2). \label{def-g5}
\end{align}
Due to the transversality condition \eqref{TC}, $\tau^+-\tau^- \neq0$, we conclude that
\begin{align*}
\nu_1k^+-\nu_2 = -((k^+)^2 +a^+b^+)(\tau^+-\tau^-) \neq0,
\end{align*}
which implies that $\nnu$ is not parallel to the tangential direction $(1,k^+)$. Moreover, if $\tau^+-\tau^- >0$, $\nnu$ is an oblique direction pointing outward. For $\tau^+-\tau^- <0$, we use $-\nnu$ as the outward direction. In the following, we simply assume that $\nnu$ is the outward oblique direction.

\section{A mixed boundary value problem} \label{sec-mbvp}

We need to solve the following mixed boundary value problem:

\begin{probc}
\begin{align} \label{eqn-v}
	(a^{ij}v_{z_i})_{z_j}  = (F^i)_{z_i}
\end{align}
  in $\DD$, with boundary conditions
\begin{align}
	&		v|_{S^-_0} =  g^-, \label{con-vs-}\\
	&	v |_{\LL} =  g^-(l) ,\label{con-vl}\\
	& D_{\nnu} v |_{S^+_0} = g_5,  \label{con-vs+}
\end{align}
where $a^{ij}$ are piecewise constants given by \eqref{def-aij} (assuming $\tau^+ >\tau^-$), and $\nnu$ is the same oblique direction pointing outward as in \eqref{con-g5}.

\end{probc}

\begin{proposition}\label{weakineq} Suppose that $v \in C^2(\DD^+) \cap C^2(\DD^-) \cap C^1(\DD^+\cup \II) \cap C^1(\DD^-\cup \II)$. Then
\begin{align} \label{ineqn-v}
	(a^{ij}v_{z_i})_{z_j}  \le (F^i)_{z_i}
\end{align}
 in $\DD$ in weak sense if and only if
\eqref{ineqn-v} holds pointwise in each $\DD^+$ and $\DD^-$, and $[a^{i2}v_{z_i}]\le [F^2]$ on $\II$.
\end{proposition}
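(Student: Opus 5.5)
The plan is to test the weak inequality against nonnegative test functions and integrate by parts separately in $\DD^+$ and $\DD^-$. Here the weak meaning of \eqref{ineqn-v} is
\begin{align*}
-\int_{\DD} a^{ij}v_{z_i}\phi_{z_j}\,\dx\zz \le -\int_{\DD} F^i\phi_{z_i}\,\dx\zz \quad \text{for all } \phi\in C_c^\infty(\DD),\ \phi\ge 0 .
\end{align*}
Since $a^{ij}$ is constant in each of $\DD^\pm$, and $v\in C^2(\DD^\pm)\cap C^1(\DD^\pm\cup\II)$, we can apply Green's formula in each subdomain. We take $F^i$ regular enough (piecewise $C^1$ up to $\II$ from each side) for the divergence theorem to apply; this is implicit in the statement. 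The support of $\phi$ is compact in $\DD$, so it meets $\po\DD^\pm$ only along $\II$.

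On $\DD^+$ the outward normal along $\II$ is $(0,-1)$, and on $\DD^-$ it is $(0,1)$. Write $[f]=f^+-f^-$ for the jump across $\II$. Then
\begin{align*}
-\int_{\DD} (a^{ij}v_{z_i}-F^j)\phi_{z_j}\,\dx\zz
= \int_{\DD^+\cup\DD^-}\big((a^{ij}v_{z_i})_{z_j}-(F^j)_{z_j}\big)\phi\,\dx\zz
+\int_{\II}\big([a^{i2}v_{z_i}]-[F^2]\big)\phi\,\dx z_1 .
\end{align*}
The sign of the boundary term follows from the two normals: the $\DD^+$ part contributes $+(a^{i2}v_{z_i}-F^2)^+\phi$ and the $\DD^-$ part contributes $-(\cdot)^-\phi$. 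This identity carries the whole argument.

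The sufficiency direction is immediate. If the pointwise inequality holds in each $\DD^\pm$ and $[a^{i2}v_{z_i}]\le[F^2]$ on $\II$, then both integrals on the right are $\le 0$ for every $\phi\ge0$, which is the weak inequality.

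For necessity we localize in two steps.
\begin{enumerate}
\item Take $\phi\ge0$ supported in $\DD^+$ (respectively $\DD^-$). The $\II$ term then vanishes, so $\int ((a^{ij}v_{z_i})_{z_j}-(F^j)_{z_j})\phi\le0$ for all such $\phi$. Because the integrand is continuous, the pointwise inequality follows in each subdomain.
\item Fix $z_1^0\in(0,l)$ and take $\phi=\chi(z_1)\theta_\delta(z_2)$ with $\chi\ge0$ and $\theta_\delta(z_2)=\theta(z_2/\delta)$ a bump equal to $1$ at $0$. The area integral is $O(\delta)$, since the integrand is bounded near $\II$ on compact sets. This boundedness is the step that needs care: we may not assume $C^2$ up to $\II$, only $C^1$. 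So we must avoid bounding $(a^{ij}v_{z_i})_{z_j}$ near $\II$. We already know from step 1 that the area term is $\le 0$, so it can simply be dropped, and letting $\delta\to0$ is then unnecessary.
\end{enumerate}

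Dropping the area term gives the wrong direction, though, so step 2 needs a different route. We write the area integral back as $-\int(a^{ij}v_{z_i}-F^j)\phi_{z_j}$ over the strip $|z_2|<2\delta$. Since $v\in C^1$ up to $\II$ from each side, and assuming $F$ is bounded, the terms $(a^{ij}v_{z_i}-F^j)\phi_{z_1}$ are bounded and integrate to $O(\delta)$. The terms with $\phi_{z_2}$ are exactly the ones that reproduce the boundary term in the limit. This is most cleanly done by applying Green's formula on $\{|z_2|>\delta'\}$ and letting $\delta'\to0$ using continuity of the fluxes up to $\II$.

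The outcome is $\int_{\II}([a^{i2}v_{z_i}]-[F^2])\chi\,\dx z_1\le0$ for all $\chi\ge0$. Continuity of the traces then gives $[a^{i2}v_{z_i}]\le[F^2]$ pointwise on $\II$. The main obstacle is this limiting argument at $\II$ under only one-sided $C^1$ regularity. I would handle it by performing the integration by parts on $\DD^\pm_{\delta'}=\DD^\pm\cap\{|z_2|>\delta'\}$ and passing to the limit, rather than integrating by parts directly up to $\II$.
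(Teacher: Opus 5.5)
Your proposal is correct and follows the paper's one-line argument. Like the paper, you integrate by parts separately in $\DD^+$ and $\DD^-$ to get the area term plus the interface term $\int_{\II}([a^{i2}v_{z_i}]-[F^2])\phi\,\dx z_1$, and then read off both directions from the weak definition. Your extra care at $\II$ fills in what the paper leaves implicit. For necessity, the cleanest version is the one you arrive at: insert $\phi=\chi(z_1)\theta(z_2/\delta)$ directly into the weak inequality, where the $\phi_{z_1}$ terms are $O(\delta)$ and the $\phi_{z_2}$ terms converge to $\int\chi([a^{i2}v_{z_i}]-[F^2])\,\dx z_1$ by one-sided continuity of the fluxes, so no bound on $(a^{ij}v_{z_i})_{z_j}$ near $\II$ is ever needed.
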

Proposition \ref{weakineq} can be shown easily by integrating by parts and using the definition of inequality \eqref{ineqn-v} in the weak sense.

\begin{proposition}\label{prop-compare}
	Suppose
	\begin{align*}
		&(a^{ij}\vf_{z_i})_{z_j}  \le  	(a^{ij}v_{z_i})_{z_j} \quad \text{in}  \  \DD,\\
			&		\vf \ge v \quad \text{on}  \   S^-_0 \cup \LL,  \\
		& \DT_{\nnu} \vf \ge \DT_{\nnu} v\quad \text{on}  \   S^+_0.
	\end{align*}
Then $\vf \ge v$ in $ {\DD}$.
\end{proposition}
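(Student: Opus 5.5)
Set $\phi := v - \vf$. By linearity, $\phi$ satisfies $(a^{ij}\phi_{z_i})_{z_j} \ge 0$ in $\DD$ in the weak sense, with $\phi \le 0$ on $S^-_0\cup\LL$ and $\DT_{\nnu}\phi \le 0$ on $S^+_0$. The claim is $\phi\le 0$ in $\DD$. I will argue by contradiction: suppose $M:=\sup_{\DD}\phi>0$. Since $\phi$ is (at least) continuous up to the boundary in the class considered, the positive maximum is attained at some point of $\overline{\DD}$, and by the boundary condition that point does not lie on $S^-_0\cup\LL$.

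\emph{Interior maximum.} First I rule out an interior point of $\DD^+$ or $\DD^-$. In each subdomain the coefficients are constant and the operator is uniformly elliptic. Indeed, the symmetric part of $(a^{ij})$ is $\mathrm{diag}(a^{11},a^{22})$, since $a^{12}+a^{21}=0$, and for $\tau^+>\tau^-$ both entries are positive because $a,b>0$. The strong maximum principle therefore makes $\phi\equiv M$ on that component. This constant then propagates to its boundary, which meets $S^-_0$ or $\LL$ (for $\DD^-$), or $\LL$ (for $\DD^+$, via $P^0$ and $P^+$). That contradicts $\phi\le0$ there.

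\emph{Maximum on $\II$.} Next I rule out a maximum on the interface $\II$. By Proposition \ref{weakineq}, the weak inequality amounts to $[a^{i2}\phi_{z_i}]\ge 0$ on $\II$. At a maximum point on $\II$, $\phi_{z_1}$ is continuous across $\II$ and equals zero, because $\phi$ restricted to $\II$ has a maximum there. Hopf's lemma, applied from each side with constant coefficients and with a boundary ball available since $\II$ is flat, gives $\phi_{z_2}^+<0$ and $\phi_{z_2}^->0$ unless $\phi$ is constant on that side. Hence $[a^{22}\phi_{z_2}]<0$, which contradicts the transmission inequality. One could alternatively use the weak maximum principle for divergence-form operators with bounded measurable coefficients (Gilbarg–Trudinger, Theorem 8.1), applied with test function $(\phi-k)^+$. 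This avoids the interface analysis but still requires handling $S^+_0$.

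\emph{Maximum on $S^+_0$.} This case, together with the corners, is the main obstacle. On the relatively open part of $S^+_0$, Hopf's lemma gives a strictly positive derivative in every outward oblique direction, since $\nnu$ points outward. This contradicts $\DT_{\nnu}\phi\le0$. The remaining candidates are the corners $\O$ (where $S^+_0$, $S^-_0$ and $\II$ meet) and $P^+$. The point $P^+$ lies on $\LL$ and is excluded by continuity. At $\O$, $\phi$ is bounded by its value on $S^-_0$, which is $\le 0$, again by continuity. Thus the only real issue is regularity at the corners: if $\phi$ is only continuous there, I need the supremum to be attained and the above to apply. To handle this I would perturb $\phi$ by $\ve h$, where $h$ is a barrier. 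A natural choice is a linear function $h$ with $\DT_{\nnu}h<0$ on $S^+_0$ and $h\le 0$, or small, elsewhere. This makes the oblique inequality strict and yields the contradiction without needing Hopf's lemma at the corners. Letting $\ve\to0$ then completes the argument.
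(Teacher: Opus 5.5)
The paper states Proposition~\ref{prop-compare} without any proof, so there is no competing argument to compare with. Your proof supplies the missing one, and it is correct. The three cases you treat together cover every possible location of a positive maximum:
\begin{itemize}
\item the strong maximum principle in each of $\DD^\pm$, where the operator reduces to $a^{11}\po_{z_1}^2+a^{22}\po_{z_2}^2$ with $a^{11},a^{22}>0$ when $\tau^+>\tau^-$;
\item Hopf's lemma from both sides at a maximum on $\II$, combined with $\phi_{z_1}=0$ there and the transmission inequality $[a^{i2}\phi_{z_i}]\ge 0$ from Proposition~\ref{weakineq}; the vanishing of $\phi_{z_1}$ is exactly what removes the term $[a^{12}]\phi_{z_1}$ coming from the jump of $a^{12}$;
\item Hopf's lemma for the outward oblique $\nnu$ on the relatively open part of $S^+_0$.
\end{itemize}

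Your last paragraph, however, is unnecessary, and as written it is slightly off. You have already excluded the corners by continuity, since $\O\in\overline{S^-_0}$ and $P^0,P^\pm\in\overline{\LL}$. Moreover, $\phi\in C^0(\overline{\DD})$ attains its maximum on the compact set $\overline{\DD}$. So the maximum sits at a point where $\phi$ has exactly the regularity your three cases use, and no perturbation is needed.

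If you did perturb by $\ve h$ with $h$ linear, there is a problem at the interface. Although $(a^{ij}h_{z_i})_{z_j}=0$ in each of $\DD^\pm$, in the weak sense $h$ produces the interface term $[a^{i2}h_{z_i}]=(a^{12}_+-a^{12}_-)h_{z_1}+(a^{22}_+-a^{22}_-)h_{z_2}$ on $\II$. Because the coefficients jump, this term has no sign in general. To keep $\phi+\ve h$ a weak subsolution you would need to require $[a^{i2}h_{z_i}]\ge 0$ in addition to $\DT_{\nnu}h<0$. It is cleaner simply to drop that paragraph.
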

Denote
\begin{align*}
	C^{k,\ga}_{(\gb,\gb')(\gbb)}(\DD)&:= C^{k,\ga}_{(\gb,\gb';\II)(\gbb;\{P^+\})}(\DD^+) \cap C^{k,\ga}_{(\gb,\gb';\II)(\gbb;\{P^-\})}(\DD^-) ,\\
	\|f\|_{k,\ga}^{(\gb,\gb')(\gbb)}&:=	\|f\|_{k,\ga;\DD}^{(\gb,\gb';\II)(\gbb;\PP)},
\end{align*}
where the norm $\|f\|_{k,\ga;\DD}^{(\gb,\gb';\II)(\gbb;\PP)}$ follows the same convention as in \eqref{def-fGO}.

Denote
\begin{align} \label{def-Cstar}
C^*(\DD):= C^0(\overline{\DD}) \cap C^2(\DD^+) \cap C^2(\DD^-) \cap C^1(\DD^+\cup \II \cup S^+_0)\cap C^1(\DD^-\cup \II).
\end{align}
We state the a priori estimate of solutions to {\bf Problem MBV} as follows.
\begin{lemma} \label{lem-apri}
	 Suppose that $v \in C^*(\DD) $ is a weak solution to {\bf Problem MBV}. Assume that $F^i \in C^{1,\ga}_{(-\ga,1-\ga)(1-\ga)}(\DD) $, $g^- \in C^{2,\ga}_{(-\ga; \{0,l\})}((0,l))$, $g_5 \in C^{1,\ga}_{(1-\ga; \{0,l\})}((0,l))$. Then $v \in C^{2,\ga}_{(-1-\ga,-\ga)(-\ga)}(\DD) $ with the following estimate:
\begin{align} \label{est-v}
	\|v \|_{2,\ga}^{(-1-\ga,-\ga)(-\ga)}  \le C \gk ,
\end{align}
where
\begin{align}	\label{def-kappa}
	\gk :=\sum_{i=1,2}\|F^i\|_{1,\ga}^{(-\ga,1-\ga)(1-\ga)} +\|g^-\|_{2,\ga;(0,l)}^{(-\ga; \{0,l\})} +\|g_5\|_{1,\ga;(0,l)}^{(1-\ga; \{0,l\})},
\end{align}
and $C$ is a positive constant depending on $a,b,\tau^+,\tau^-, k^+,k^-,l,\ga$.
\end{lemma}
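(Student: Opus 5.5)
We will prove Lemma~\ref{lem-apri} in three steps: an $L^\infty$ bound from barriers, a local Schauder/corner analysis giving weighted H\"older decay, and then a patching argument that converts everything into the norm $\|\cdot\|_{2,\ga}^{(-1-\ga,-\ga)(-\ga)}$.

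\emph{Step 1: maximum norm.} We first reduce to homogeneous boundary data. Let $G$ be an extension of $g^-$ into $\DD$ that is constant in the direction transverse to $S^-_0$, say $G(\zz)=g^-(z_1)$ near $S_0^-$, suitably cut off. Since $g^-$ is already constant at $z_1=l$ in the sense of \eqref{con-vl}, the function $v-G$ has zero Dirichlet data on $S^-_0\cup\LL$. Its oblique data and right-hand side are modified by terms controlled by $\gk$.

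We then construct a barrier $\vf=M\gk\,\Phi(\zz)$ with $\Phi$ piecewise linear in $\DD^\pm$, chosen so that
\begin{itemize}
\item $(a^{ij}\Phi_{z_i})_{z_j}\le -1$ in each $\DD^\pm$,
\item the flux jump $[a^{i2}\Phi_{z_i}]\le 0$ on $\II$, as required by Proposition~\ref{weakineq},
\item $D_{\nnu}\Phi\ge 1$ on $S^+_0$, which is possible because $\nnu$ is outward oblique,
\item $\Phi\ge0$ on $S^-_0\cup\LL$.
\end{itemize}
The divergence-form right-hand side $(F^i)_{z_i}$ is not bounded pointwise near the corners, so it is handled in one of two ways. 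Either we solve an auxiliary Dirichlet problem to absorb it, or we use weak comparison via Proposition~\ref{prop-compare} after splitting $F^i$ into a part that is bounded in $L^\infty$ with weight. Once this is done, Proposition~\ref{prop-compare} applied to $\pm v$ gives $\sup|v|\le C\gk$.

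\emph{Step 2: local weighted estimates.} Away from $\II$ and the corners, standard interior, Dirichlet and oblique-derivative Schauder estimates give $C^{2,\ga}$ bounds in scaled balls. Near $\II$ away from its endpoints, $v$ solves a transmission problem with piecewise constant coefficients and a flat interface. Here we apply the Chen--Deng estimates \cite{ChenDeng}, which give $C^{2,\ga}$ up to $\II$ from each side; this matches the weight $-1-\ga$ to $\II$ in the interior of the segment, thanks to $F^i\in C^{1,\ga}$ with weight $-\ga$ to $\II$.

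The corners are $\O$ (where $S^+_0$, $\II$ and $S^-_0$ meet), $P^0$ (where $\II$ meets $\LL$), and $P^\pm$. At each corner we will show H\"older decay $|v-v(\text{corner})|\le C\gk r^{\ga}$. For this we use comparison with homogeneous barriers of the form $r^{\ga}\psi(\theta)$, built sectorwise, solving an ODE in $\theta$ with transmission conditions at $\II$. The ODE is posed with
\begin{itemize}
\item oblique conditions at $P^+$ and at the $S^+_0$ side of $\O$,
\item Dirichlet conditions at $P^-$, at the $S^-_0$ side of $\O$, and on $\LL$.
\end{itemize}
Such barriers exist for $\ga$ sufficiently small; this is exactly where the smallness of $\ga$ in the statement enters. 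Scaling then gives, in dyadic annuli around each corner, that the rescaled function is bounded in $C^{2,\ga}$ by $C\gk$. Summing these bounds yields the weights $-\ga$ at $P^\pm$ and $-\ga$ at the endpoints of $\II$.

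\emph{Step 3: assembly.} We patch the interior, boundary and corner estimates together into $\|v\|_{2,\ga}^{(-1-\ga,-\ga)(-\ga)}\le C(\gk+\sup|v|)$, and combine with Step 1.

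The main obstacle is the corner $\O$, where three boundary pieces of different type meet, with a coefficient discontinuity across $\II$ and mixed Dirichlet/oblique conditions. Here I would first try to construct the homogeneous barrier $r^\ga\psi(\theta)$ explicitly, using the ellipticity of the constant-coefficient operators in each sector and the transversality condition \eqref{TC}. The key point is to show that the smallest admissible homogeneity exponent of the corresponding eigenvalue problem is positive, so that any small $\ga$ below it works.
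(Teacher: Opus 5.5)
\textbf{Verdict: genuine gap in Step~1.} Your overall plan matches the paper's: a global $L^\infty$ bound by comparison, then $r^\ga$ decay at the corners from homogeneous barriers, then scaled Schauder estimates plus Chen--Deng near $\II$. But Step~1, as you describe it, does not work.

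\emph{The barrier you propose cannot exist.} A piecewise linear $\Phi$ satisfies $(a^{ij}\Phi_{z_i})_{z_j}=0$ in each $\DD^\pm$, because the $a^{ij}$ are constant there. So the requirement $\le -1$ cannot be met.

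\emph{More importantly, no barrier with bounded gradient up to the corners can work, because the lemma allows singular data.}
\begin{itemize}
\item $g_5\in C^{1,\ga}_{(1-\ga;\{0,l\})}$ only gives $|g_5(z_1)|\le \gk\min(z_1,l-z_1)^{\ga-1}$.
\item The weights on $F^i$ give $|F^i|\le C\gk r^{\ga-1}$ at $\O$, $P^0$ and $P^\pm$. Hence the flux datum $[F^2]$ on $\II$ may be unbounded at the endpoints of $\II$.
\item $(F^i)_{z_i}$ may be as large as $r^{\ga-2}$ near the corners and $|z_2|^{\ga-1}$ near $\II$.
\end{itemize}
The supersolution requirements $D_{\nnu}\vf\ge g_5$ on $S_0^+$, $[a^{i2}\vf_{z_i}]\le[F^2]$ on $\II$, and $(a^{ij}\vf_{z_i})_{z_j}\le(F^i)_{z_i}$ therefore force $|\nabla\vf|$ to blow up like $r^{\ga-1}$ at the corners. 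A barrier $M\gk\Phi$ with $D_{\nnu}\Phi\ge 1$ cannot dominate $g_5$ near $\O$ or $P^+$.

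Your two suggested remedies for the right-hand side are not specified, and neither addresses $g_5$. Solving an auxiliary Dirichlet problem for $(F^i)_{z_i}$ only moves a corner singularity into the oblique datum $g_5-D_{\nnu}w$, and it needs its own corner analysis.

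\textbf{The missing idea.} The singular homogeneous barriers you postpone to Step~2 must already be used for the $L^\infty$ bound. This is what the paper does.
\begin{itemize}
\item After normalizing $a^{11}=a^{22}$ in each $\DD^\pm$, it takes $\vf_1=r^\ga\sin(\ga^{1/2}\theta+\pi/4)$ in $\DD^+$ and $\vf_1=r^\ga\sin(\ga^{1/4}\theta+\pi/4)$ in $\DD^-$.
\item This gives $(a^{ij}(\vf_1)_{z_i})_{z_j}\le -cr^{\ga-2}$ in each subdomain.
\item The flux jump on $\II$ is $\le -cr^{\ga-1}$; the larger frequency $\ga^{1/4}$ on the $\DD^-$ side makes it negative for small $\ga$.
\item $D_{\nnu}\vf_1\ge cr^{\ga-1}$ on $S_0^+$, since $\nnu$ is outward oblique; this is where the transversality condition (TC) enters.
\item The paper adds cut-off analogues $\vf_2,\vf_3,\vf_4$ at $P^0,P^+,P^-$, and the term $-|z_2|^{1+\ga}$ to absorb the blow-up of $(F^i)_{z_i}$ near $\II$.
\item It then sets $\vf_5=|g^-(0)|+C\gk(C_1\vf_1+\eta\vf_2+\eta\vf_3+\eta\vf_4-|z_2|^{1+\ga})$.
\end{itemize}
Since $r^\ga$ is bounded, this single barrier gives $|v|\le C\gk$. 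After subtracting $g^-(0)$, it also gives the decay $|v-v(\O)|\le C\gk r^\ga$. So your Steps~1 and~2 merge into one barrier argument.

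\textbf{A smaller correction in Step~2.} $F^i$ is only $C^\ga$ up to $\II$ (weight $-\ga$), so Chen--Deng gives piecewise $C^{1,\ga}$, not $C^{2,\ga}$, up to $\II$. The weighted second-derivative bound encoded by the index $-1-\ga$ comes instead from interior Schauder estimates on balls of radius comparable to $|z_2|$. These are applied to $v$ minus its first-order Taylor polynomial at the ball's centre, exactly as in the paper.
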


\begin{proof}
	Denote $\LL \vf= (a^{ij}\vf_{z_i})_{z_j}$. Since under coordinate transformation $\zb_2 = \sqrt{\frac{a^{11}}{a^{22}}} z_2$, $\LL$ becomes the Laplacian operator in each $\DD^+$ and $\DD^-$, we may assume that $a^{11} = a^{22}$ for simplicity.

We use polar coordinates $(r,\gth)$ centered at $\0$ to define
\begin{align*}
\vf_1(\zz)=	\left\{
	\begin{array}{ll}
	\vf_1^+(r,\gth)=	r^\ga \sin(  \ga^{\frac{1}{2}}\gth+\frac{\pi}{4}),& \gth \ge 0;\\
	\vf_1^-(r,\gth)=	r^\ga \sin( \ga^{\frac{1}{4}}\gth+\frac{\pi}{4}),& \gth < 0.
\end{array}
\right.
\end{align*}
The different frequency parameters $\ga^{1/2}$ and $\ga^{1/4}$ in $\DD^+$ and $\DD^-$ are chosen according to the aperture of the two wedges, so that $\sin(\gl\gth+\pi/4)$ keeps a fixed positive sign in each subdomain and the jump estimate on $\II$ below holds; since $\ga<1$, taking $\ga$ suitably small makes both requirements compatible.

Noticing that $a^{ij}$ are piecewise constant and $a^{12}= -a^{21}$, we see that in each subdomain $\DD^+$ and $\DD^-$, $\LL = a^{11}\GD$. Therefore, in $\DD^+$ we have
\begin{align*}
\LL	\vf_1(\zz)&= a^{11}_+\GD 	\vf_1^+(r,\gth)\\
&= r^{\ga-2} (\ga^2 -\ga) \sin( \ga^{\frac{1}{2}}\gth+\tfrac{\pi}{4}) \\
& \le -c r^{\ga-2}  \qquad (\ga  <\tfrac{1}{8^4} )
\end{align*}
	Similarly, in $\DD^-$ we have the same estimate
\begin{align}
	\LL	\vf_1(\zz)& \le -c r^{\ga-2}.  \label{est-vf1}
\end{align}
On $\II$, we have
\begin{align}
&[a^{12}(\vf_1) _{z_1} +a^{22}(\vf_1) _{z_2}	]\nonumber\\
={}& (a^{12}_+ - a^{12}_-)\po_r\vf^+_1(r,0)+\frac{1}{r}\po_\gth(a^{22}_+\vf^+_1- a^{22}_-\vf^-_1)(r,0)\nonumber\\
\le  {}&   |a^{12}_+- a^{12}_-|\ga r^{\ga-1} +(a^{22}_+ \ga^{\frac{1}{2}}-  a^{22}_- \ga^{\frac{1}{4}})r^{\ga-1}\sin(  \tfrac{\pi}{4}) \nonumber\\
\le  {}&  -c r^{\ga-1} .    \qquad (\  \ga \text{ suitably small  }) \label{est-vf1-jump}
\end{align}

Denote the tangential and normal component of $\nnu$ by
\begin{align*}
\nnu_t &= \nnu \cdot \left(\tfrac{1}{\sqrt{{k^+}^2+1}}, \tfrac{k^+}{\sqrt{{k^+}^2+1}}\right)^\top,\\
 \nnu_n &= \nnu \cdot \left(-\tfrac{k^+}{\sqrt{{k^+}^2+1}},\tfrac{1}{\sqrt{{k^+}^2+1}} \right)^\top.
\end{align*}
Estimate $D_{\nnu}\vf_1$ on $S^+_0$ as follows:
\begin{align*}
D_{\nnu}\vf_1 (r, \go^+) &= (\nnu_t \po_r + \nnu_n\frac{1}{r}\po_\gth)\vf_1  (r, \go^+)\\
&=r^{\ga-1}\left(\nnu_t\ga\sin( \ga^{\frac{1}{2}}\go^+ +\tfrac{\pi}{4}) +\nnu_n \ga^{\frac{1}{2}} \cos(\ga^{\frac{1}{2}}\go^+ +\tfrac{\pi}{4})\right)\\
&\ge c r^{\ga-1}.
\end{align*}

To control the singularity at $P^0$, we define $\vf_2$ using polar coordinates $(r_*, \gth_*)$ centered at $P^0$:
\begin{align*}
	\vf_2(\zz)=	\left\{
	\begin{array}{ll}
	r_*^\ga \sin(\ga^{\frac{1}{4}}( \gth_*-\pi)+\frac{\pi}{4} ),& \frac{\pi}{2} \le \gth_* \le \pi;\\
	 	r_*^\ga \sin(\ga^{\frac{1}{2}}( \gth_*-\pi)+\frac{\pi}{4} ),& \pi <\gth_*\le\frac{3}{2} \pi  .
	\end{array}
	\right.
\end{align*}

To control the singularity at $P^+$ and $P^-$, we define $\vf_3$ and $\vf_4$, using polar coordinates $(r_+, \gth_+)$ centered at $P^+$ and $(r_-, \gth_-)$ centered at $P^-$, respectively as follows:
\begin{align*}
	\vf_3(\zz)&=
		r_+^\ga \cos(\ga^{\frac{1}{2}} \gth_+  +\tfrac{\pi}{4}),\\
\vf_4(\zz)&=
		r_-^\ga \sin(\ga^{\frac{1}{2}}  \gth_-  +\tfrac{\pi}{4}).
\end{align*}
Denote
\begin{align*}
l^*= \dfrac{1}{8}\min(l, lk^+, -lk^-)
\end{align*}
Set
\begin{align*}
	\vf_5 = |g^-(0)|+	 C\gk (C_1 \vf_1 +\eta(\tfrac{r_*}{l^*}) \vf_2+\eta(\tfrac{r_+}{l^*} )\vf_3+\eta(\tfrac{r_-}{l^*})\vf_4-|z_2|^{1+\ga} ).
\end{align*}

{\bf Verification of the barrier $\vf_5$.}
The argument below is similar to that in \cite[Step 2]{ADV1} except for an additional estimate across the contact discontinuity $\II$.  We only give the details for $\vf_2$; the estimates for $\vf_3, \vf_4$ are analogous. As computed
above for $\vf_1$ (see \eqref{est-vf1}), we have that in each of $\DD^+\cap B_{l^*}(P^0)$ and $\DD^-\cap B_{l^*}(P^0)$,
\begin{align}\label{est-vf2}
	&\LL \vf_2\le -c\,r_*^{\ga-2},  
\end{align}
and on $\II\cap B_{l^*}(P^0)$,
\begin{align}\label{est-vf2-jump1}
 &[a^{12}(\vf_2) _{z_1} +a^{22}(\vf_2) _{z_2}	]
\le   -c r_*^{\ga-1}.
\end{align}

The cut-off  $\eta(r_*/l^*)$ localizes the singularity of  $\vf_2$ in the following sense:
\begin{align}\label{est-vf2-loc}
&	\|\eta(r_*/l^*)\vf_2\|_{C^2(\DD^{\pm}\cap B_{2l^*}(P^0)\setminus B_{l^*}(P^0))}\le C_0\\
&	\left|[a^{12}(\vf_2) _{z_1} +a^{22}(\vf_2) _{z_2}	]|_{(l-2l^*, l-l^*)}\right|\le C_0. \label{est-vf2-jump2}
\end{align}
Similar estimates to \eqref{est-vf2} \eqref{est-vf2-loc} can be obtained for $\vf_3,\vf_4$. We note that, by the choice of $l^*$, the supports of the cut-offs $\eta(r_*/l^*)$, $\eta(r_+/l^*)$ and $\eta(r_-/l^*)$ are pairwise disjoint and do not contain $\O$; hence at every point of $\DD$ at most one of the localized terms is singular. 
For $-|z_2|^{1+\ga}$ term, we have
\begin{align}\label{est-vfz2}
	\LL\big(-|z_2|^{1+\ga}\big)=-a^{22}(1+\ga)\ga|z_2|^{\ga-1}\le -c|z_2|^{\ga-1}
\end{align}
in each of $\DD^+$ and $\DD^-$. We note that $-|z_2|^{1+\ga}$ term does not contribute jump quantity.

Now we see that in each of $\DD^+$ and $\DD^-$,
\begin{align*}
	\LL\vf_5\le C\gk\big(-C_1c\,r^{\ga-2} - c (r_{\pm}^{\ga-2} + r_*^{\ga-2} + |z_2|^{\ga-1})\big)\le (F^i)_{z_i},
\end{align*}
since the weights entering the definition \eqref{def-kappa} of $\gk$ give
$|(F^i)_{z_i}|\le C\gk\,(r_{\pm}^{\ga-2} + r_*^{\ga-2}+|z_2|^{\ga-1})$, and $C$ is large.

It is easy to see that   $\vf_5\ge g^-$  on $S^-_0\cup\LL$  for
$C$ large. On $S^+_0$, the estimate $D_{\nnu}\vf_1\ge c\,r^{\ga-1}$ obtained
above, together with $D_{\nnu}\vf_3\ge c\,r_+^{\ga-1}$ near $P^+$ and the boundedness of
the remaining terms, gives
\begin{align*}
	D_{\nnu}\vf_5\ge C\gk\big(C_1c\,r^{\ga-1} +  c r_+^{\ga-1} -C_0\big)\ge g_5
\end{align*}
for $C_1$ suitably large and then $C$ large.

Since \eqref{eqn-v} is understood in the weak sense, Proposition \ref{weakineq}
requires in addition the jump condition
\begin{align}\label{est-vf5-jump}
	[a^{i2}(\vf_5)_{z_i}]\le [F^2]\qquad\text{on }\ \II .
\end{align}
By \eqref{est-vf1-jump} \eqref{est-vf2-jump1} and \eqref{est-vf2-jump2}, we obtain that on $\II$,
\begin{align*}
	[a^{i2}(\vf_5)_{z_i}]\le C\gk\big(-C_1c\,r^{\ga-1} - cr_*^{\ga-1} +C_0 \big)\le [F^2]
\end{align*}
for $C_1$ suitably large and then $C$ large. Therefore \eqref{est-vf5-jump} holds and, by
Proposition \ref{weakineq}, $\vf_5$ satisfies the differential inequality in the weak
sense.

Finally,  all the estimates of this verification remain valid when
$(F^i)_{z_i}$, $g^-$, $g_5$ and $[F^2]$ are replaced by their negatives. Hence,
 we deduce  that
$\vf_5\ge|g^-|$ on $S^-_0\cup\LL$ and $D_{\nnu}\vf_5\ge|g_5|$ on $S^+_0$; consequently
$\vf_5$ and $-\vf_5$ are, respectively, a supersolution and a subsolution of {\bf Problem
MBV} in the sense of Definition \ref{def-subsol}, which justifies the corresponding
statement in the proof of Lemma \ref{lem-exist}.

Therefore, by Proposition \ref{prop-compare}, we conclude that
\begin{align} \label{est-v-C0}
	|v(\zz)| \le \vf_5 (\zz) \le C\gk .
\end{align}
Once we obtain the $C^0$ estimate \eqref{est-v-C0} for $v$, standard Schauder estimates as in \cite{gt} combined with the interior estimate across the contact discontinuity as in \cite{ChenDeng} will give rise to \eqref{est-v}. In particular, discussion about the regularity near $\II$ and the corners $\O, P^0$ is similar to the wedge problems as in \cite{CCF1}. However, the estimates across the contact discontinuity are a new ingredient, in addition to the wedge problems. Therefore, for the completeness, we will sketch the proof of the estimate \eqref{est-v} below.

\smallskip

\noindent {\bf Corner estimates with weight index $(-\ga)$.} We only take corner $\O$ as an example. Denote
\begin{align*}
&\go^{\pm}:= \arctan k^{\pm},&& \go^*:= \tfrac{1}{8} \min(\go^+,-\go^-).
\end{align*}
Fixing any point $\zz^0 \in \overline{\DD} $ with polar coordinates $(r_0, \gth_0)$,
we consider four angle ranges for $\gth_0$:
\begin{align*}
&I_1 = (\go^-+\go^*, -\go^*)\cup (\go^*,\go^+ - \go^*),&& I_2=  [\go^-, \go^-+\go^*]\\
&I_3 = [ \go^+ -\go^*, \go^+],&& I_4=  [-\go^*, \go^*].
\end{align*}
Set
\begin{align*}
&	B_1= B_{\rb}(\zz^0)\cap \DD,&& B_2= B_{2\rb}(\zz^0)\cap \DD , &&T=\overline{B_2}\cap \po \DD,
\end{align*}
	where
\begin{align*}
\rb = 	\begin{cases}
 \frac{r_0}{4} \sin \go^*,  & \gth_0 \in I_1, \\
2 r_0 \sin \go^* ,&  \gth_0 \in I_2\cup I_3\cup I_4.
\end{cases}
\end{align*}

We first use $\vf_5(\zz) - g^-(0)$ as the barrier function for $\vh(\zz):= v(\zz) - v(\0)$ and obtain
\begin{align} \label{est-v-rga}
	|\vh(\zz) |   \le C\gk r^\ga .
\end{align}
Let $G$ be a bounded open domain and $f$ a function defined in $G$, $d:= \mbox{diam}\, G$.
We define the following H\"older norms with scalings (\textit{cf.} page 53 in \cite{gt}):
\begin{align*}
	\| f\|'_{k; G} &= \sum_{j=0}^k d^j[ f]_{j,0;G},\\
\| f\|'_{k,\ga;G} &=\| f\|'_{k; G}+ d^{k+\ga}[ f]_{k,\ga;G}.
\end{align*}
If $\gth_0 \in I_1$, we apply Schauder interior estimate (\textit{cf.} Corollary 6.3, Theorem 6.26, in \cite{gt}) to obtain
\begin{align} \label{est-vint-2ga}
	\|\vh  \|'_{2,\ga; B_1} \le C(\| \hat{v} \|_{0,0;B_2}+  	\rb^2 \| ( F^i )_{z_i} \|'_{0,\ga; B_2} )   \le C\gk \rb^\ga.
\end{align}
For $\gth_0 \in I_2$ or $\gth_0 \in I_3$, Schauder boundary estimates (\textit{cf.} Lemma 6.4, Lemma 6.29 in \cite{gt}) give rise to
\begin{align} \label{est-vbd-2ga}
	\|\vh  \|'_{2,\ga; B_1} \le C(\| \hat{v} \|_{0,0;B_2} + \bar{r} \| g_5 \|'_{1,\ga;T} +  	\rb^2 \| ( F^i )_{z_i} \|'_{0,\ga; B_2} )   \le C\gk \rb^\ga.
\end{align}

\smallskip

\noindent {\bf Boundary estimates near $\II$ with weight index $(-1-\ga)$.} The case for $\gth_0 \in I_4$ contains the contact discontinuity $\II$ and $v$ is expected to be $C^{1,\ga}$ up to $\II$. We will split the estimation into two steps.

First, we will derive piecewise $C^{1,\ga}$ estimates based on Lemma 2.1 in \cite{ChenDeng}.
Denote $B_n^{\pm}:= B_n\cap \DD^{\pm}, n=1,2$. Then (2.7) in Lemma 2.1 of \cite{ChenDeng} renders
\begin{align} \label{est-vcontact-1ga}
	\|\vh  \|'_{1,\ga; B_1^{\pm}} \le C(\| \hat{v} \|_{0,0;B_2}+  	\rb \sum_{i=1,2} \| ( F^i ) \|'_{0,\ga; B_2^{\pm}} )   \le C\gk \rb^\ga.
\end{align}

To derive $C^{2,\ga}$ estimates, we set
\begin{align*}
&\db := \frac{r^0}{4} \sin \gth_0,\quad\bar{B}_n := B_{n\db}(\zz^0), \quad n=1,2 ,\\
&\vb(\zz):= v(\zz)-v(\0)- Dv(\zz^0)\cdot(\zz-\zz^0) .
\end{align*}
Obviously, $\vb$ also satisfies \eqref{eqn-v}. By Schauder interior estimates, it follows that
\begin{align} \label{est-vcontact-2ga}
	\|\vb  \|'_{2,\ga; \bar{B}_1} \le C(\| \vb \|_{0,0;\bar{B}_2}+  	\db^2 \| ( F^i )_{z_i} \|'_{0,\ga; \bar{B}_2} )   \le C\gk \rb^{-1}\db^{1+\ga}.
\end{align}
Estimates \eqref{est-vint-2ga}-- \eqref{est-vcontact-2ga} will give rise to \eqref{est-v}.

\end{proof}

We will show the existence of solutions to {\bf Problem MBV} by Perron's method. We introduce the following definitions, similar to those in \cite{Lieberman1,Lieberman2}.
\begin{definition}
We say that {\bf Problem MBV} is locally solvable, if for any point $\zz^* \in \DD\cup S_0^+$, there exists a neighborhood $\NN$ of $\zz^*$, relative open in $\DD\cup S_0^+$, such that for any $h\in C(\overline{\NN})$, the mixed boundary problem \eqref{eqn-v} in $\NN$ with boundary conditions \eqref{con-vs+} on $\po^b \NN$, $v=h$ on $\po^i\NN$ is uniquely solvable in the space $ C^*(\NN)$, where
\begin{align*}
& C^*(\NN)=C^0(\overline{\NN}) \cap C^2(\NN^+) \cap C^2(\NN^-) \cap C^1(\overline{\NN^+}\backslash \po \NN) \cap C^1(\overline{\NN^-}\backslash \po \NN)\\
&	\NN^+ = \NN \cap \DD^+, 	\NN^- = \NN \cap \DD^-,\\
&\po^b \NN = \po \NN \cap S_0^+,  \po^i \NN = \po \NN \backslash \po^b\NN .
\end{align*}
\end{definition}
\begin{remark}
In the definition above, if $\zz^*\notin \II$, we may choose a suitably small neighborhood $\NN$, so that either $\NN^+ $ or $\NN^- $ is empty. Similarly, if $\zz^*\notin S_0^+$, $\po^b \NN$ may be empty.
\end{remark}

\begin{proposition}\label{prop-solvable}
Under the same assumption on $F^i, g^-$ as in Lemma \ref{lem-apri}, {\bf Problem MBV} is locally solvable.
\end{proposition}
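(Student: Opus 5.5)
The plan is to prove local solvability by splitting into cases according to where $\zz^*\in\DD\cup S_0^+$ lies. We choose $\NN$ as a small ball (or half-ball) around $\zz^*$ whose closure stays away from the corners $\O,P^0,P^\pm$. In every case we first rescale $\zb_2=\sqrt{a^{11}/a^{22}}\,z_2$ separately in $\DD^\pm$, as in the proof of Lemma \ref{lem-apri}, so that $\LL=a^{11}\GD$ in each subdomain. Since $a^{12}=-a^{21}$ is constant on each side, the antisymmetric part contributes nothing in the interior and enters only through the transmission condition on $\II$.

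\emph{Case 1: $\zz^*\in\DD^\pm$.} We take $\NN=B_r(\zz^*)$ with $\NN\subset\DD^+$ or $\NN\subset\DD^-$. The problem is then the Dirichlet problem for the Laplacian with right-hand side $(F^i)_{z_i}$. Since $F^i\in C^{1,\ga}$ in the interior, $(F^i)_{z_i}\in C^{0,\ga}_{loc}$. Solvability in $C^2(\NN)\cap C^0(\overline{\NN})$ for continuous $h$ then follows from the classical theory for balls (\cite{gt}, Theorem 6.13 together with the Poisson integral). Uniqueness follows from the maximum principle.

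\emph{Case 2: $\zz^*\in S_0^+$.} We take $\NN=B_r(\zz^*)\cap\DD$, a half-disc, with $\po^b\NN$ the flat piece of $S_0^+$. The condition there is a constant-coefficient oblique derivative condition, and $\nnu$ is not tangent by \eqref{TC}. We invoke Lieberman's theory of mixed/oblique problems in domains with corners \cite{Lieberman1,Lieberman2}: the two corner points where $\po^b\NN$ meets $\po^i\NN$ have an interior angle of $\pi/2$, and the oblique condition remains uniformly oblique there. This gives a solution in $C^0(\overline{\NN})\cap C^2(\NN)\cap C^1(\NN\cup\po^b\NN)$. Alternatively, an explicit reflection argument works: after a linear change of variables the oblique condition becomes a Neumann condition and the half-disc becomes a sector, where Perron's method with local barriers at the corners applies. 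Uniqueness follows from Proposition \ref{prop-compare} localized to $\NN$, i.e., the maximum principle plus the Hopf lemma for oblique conditions.

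\emph{Case 3: $\zz^*\in\II$.} This is the main obstacle. We take $\NN=B_r(\zz^*)$, split by $\II$ into two half-discs. The weak formulation of \eqref{eqn-v} amounts to the transmission conditions $[v]=0$ and $[a^{i2}v_{z_i}]=[F^2]$ on $\II$, with constant but different coefficients on each side.

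We first reduce to homogeneous data. Set $\tilde F^i:=F^i-\bar F^i$, where $\bar F^i$ is a piecewise constant or extension chosen so that the new jump vanishes. Alternatively, subtract a particular solution $w$ obtained from the Newtonian potential on each side, corrected to match the jump; Lemma 2.1 of \cite{ChenDeng} provides the $C^{1,\ga}$ regularity up to $\II$ from each side.

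For the homogeneous transmission problem with Dirichlet data $h$ on $\po\NN$, we prove existence by Perron's method for the divergence-form operator with piecewise constant coefficients. Subsolutions are defined via Proposition \ref{weakineq}. The barriers at boundary points of $\po\NN$ are local, and near the two points $\po\NN\cap\II$ we use functions of the type $\vf_2$ built in the proof of Lemma \ref{lem-apri}, which already handle the jump. Equivalently, existence follows from Lax--Milgram in $H^1$ for smooth $h$, since $a^{ij}$ has positive definite symmetric part. Continuity up to $\po\NN$ comes from these barriers, and the passage to continuous $h$ follows by approximation and the maximum principle. Regularity in $C^2(\NN^\pm)\cap C^1(\overline{\NN^\pm}\setminus\po\NN)$ follows from \cite{ChenDeng} (piecewise $C^{1,\ga}$) combined with interior Schauder estimates on each side.

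Uniqueness in $C^*(\NN)$ again follows from the weak maximum principle for the divergence-form operator. The key verification step will be checking that the barriers at $\po\NN\cap\II$ satisfy the jump inequality; this uses the same choice of frequencies $\ga^{1/2},\ga^{1/4}$ as in \eqref{est-vf1-jump}.
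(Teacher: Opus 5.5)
Your proposal is correct and follows the paper's route: split according to whether $\zz^*\in\II$, use Lieberman's theory \cite{Lieberman2} away from $\II$ (or the classical Dirichlet problem in the interior), use \cite{ChenDeng} at $\II$, and upgrade to $C^2(\NN^\pm)$ by interior Schauder estimates on each side. The paper simply cites \cite{ChenDeng} for solvability at $\II$, so where you supply existence yourself you should rely on your Lax--Milgram argument rather than Perron's method, because Perron lifts on discs centred on $\II$ presuppose exactly the local solvability being proved; likewise, run the uniqueness argument on slightly smaller discs, since functions in $C^*(\NN)$ need not belong to $H^1(\NN)$.
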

\begin{proof}
If $\zz^*\notin\II$, the local solvability of mixed boundary value problems follows from \cite{Lieberman2}. If $\zz^* \in \II$, we refer to \cite{ChenDeng} for the solvability of the problem. In \cite{ChenDeng}, the solution is piecewise $C^{1,\ga}$. Here, since in each $\NN^{\pm}$, $F^i$ is $C^{1,\ga}$, it is obvious that the solution $v$ is piecewise $C^{2,\ga}$.
\end{proof}

\begin{definition}[Subsolutions(Supersolutions)]\label{def-subsol}
We assume that {\bf Problem MBV} is locally solvable. Let $\vf \in C^0(\overline{\DD})$. If for any $\zz^* \in \DD\cup S_0^+$, there exists a neighborhood $\NN$ relative open in $\DD\cup S_0^+$, such that any solution $v\in C^*(\NN)$ to the problem $\LL v = (F^i)_{z_i}$ in $\NN$, $D_{\nnu}v = g_5$ on $\po^b \NN $ and $v\ge \vf$ on $\po^i \NN $ implies that $\vf \le v$ in $\NN$, then we say that $\vf$ is a subsolution to \eqref{eqn-v}--\eqref{con-vs+}. Denote the set of all subsolutions by $\Phi^-$. By reversing the inequalities above, we can define supersolutions in the same manner.
\end{definition}

We list a few properties of subsolutions and supersolutions below, which will be needed during the proof below.
\begin{enumerate}
	\item If $v_1,v_2 \in \Phi^-$, then $\max(v_1,v_2) \in \Phi^-$.

	\item If $v_1 \in \Phi^-, v_2 \in \Phi^+$, then $v_1 \le v_2$.

	\item Assume that $v \in \Phi^-$. Given $\zz^* \in \DD\cup S_0^+$, by the local solvability property, let $\NN$ be an associate neighborhood. Define $\vb$ as follows: $\vb(\zz) = v(\zz)$ for $\zz \in \DD \backslash \NN$, $\LL \vb = (F^i)_{z_i}$ in $\NN$, $D_{\nnu}\vb = g_5$ on $\po^b \NN $. Then $\vb \in \Phi^-$. We call $\vb$ a {\bf lift} of $v$.

	\item Suppose that $\{v_k\}$ is a bounded sequence in $C^*(\NN)$ satisfying $\LL v_k = (F^i)_{z_i}$ in $\NN$, $D_{\nnu}v_k = g_5$ on $\po^b \NN $. Then there exists a convergent subsequence $\{v_{k_j}\}$, such that $v=\lim\limits_{j\to \infty}v_{k_j}$ is a solution to the same problem $\LL v = (F^i)_{z_i}$ in $\NN$, $D_{\nnu}v = g_5$ on $\po^b \NN $, and $v \in C^2(\NN^+) \cap C^2(\NN^-) \cap C^1(\overline{\NN^+}\backslash \po \NN) \cap C^1(\overline{\NN^-}\backslash \po \NN)$.
\end{enumerate}
Properties 1--3 can be verified easily. Property 4 can be shown by taking a sequence of domains $\{\NN_k\}$ approaching $\NN$ from inside, and employing estimates in \cite{Lieberman1} and in Lemma \ref{lem-apri}, and a diagonal process. We refer to \cite{Lieberman1} for more details.

The unique existence result of {\bf Problem MBV} is summarized as follows.
\begin{lemma} \label{lem-exist}
	Under the same assumption on $F^i, g^-, g_5$ as in Lemma \ref{lem-apri}, there exists a unique solution $v \in C^{2,\ga}_{(-1-\ga,-\ga)(-\ga)}(\DD)$ to {\bf Problem MBV} with estimate \eqref{est-v}.
\end{lemma}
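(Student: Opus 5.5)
Uniqueness comes from Proposition \ref{prop-compare}, and existence from Perron's method with the barriers $\pm\vf_5$ built in the proof of Lemma \ref{lem-apri}. For uniqueness, take two solutions $v,\tilde v\in C^{2,\ga}_{(-1-\ga,-\ga)(-\ga)}(\DD)$; such functions lie in $C^*(\DD)$. Their difference $\vf=v-\tilde v$ satisfies the homogeneous problem: $\LL\vf=0$ weakly in $\DD$, $\vf=0$ on $S^-_0\cup\LL$, and $D_{\nnu}\vf=0$ on $S^+_0$. Applying Proposition \ref{prop-compare} to $\vf$ and $0$ in both directions gives $\vf\equiv 0$.

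For existence I would run the Perron method in the form of Lieberman \cite{Lieberman1,Lieberman2}. Proposition \ref{prop-solvable} gives local solvability, so Definition \ref{def-subsol} applies. The verification in the proof of Lemma \ref{lem-apri} shows that $-\vf_5\in\Phi^-$ and $\vf_5\in\Phi^+$. Set
\[
v(\zz):=\sup\{\phi(\zz):\ \phi\in\Phi^-,\ \phi\le\vf_5\}.
\]
By property 2, $-\vf_5\le v\le\vf_5$, so $v$ is well defined and bounded by $C\gk$. Fix $\zz^*\in\DD\cup S_0^+$, including points of $\II$, with its neighbourhood $\NN$. Choose $\phi_k\in\Phi^-$ with $\phi_k(\zz^*)\to v(\zz^*)$, replace them by $\max(\phi_k,-\vf_5)$ using property 1, and then take their lifts $\bar\phi_k$ in $\NN$ by property 3. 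The lifts are bounded solutions in $C^*(\NN)$, so property 4 gives a subsequence converging to a local solution $w$ with $w(\zz^*)=v(\zz^*)$. The standard comparison argument, again using lifts and property 1 with a second point, shows $w=v$ in a smaller neighbourhood. Hence $v$ solves \eqref{eqn-v} in $\DD^\pm$, satisfies the jump condition across $\II$ (because the local solutions from \cite{ChenDeng} are weak solutions across $\II$), and satisfies \eqref{con-vs+} on the relatively open part of $S_0^+$.

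The main obstacle is attaining the Dirichlet data on $S^-_0\cup\LL$, in particular at the corners $\O$, $P^-$, $P^0$, $P^+$. I would build local barriers at each boundary point. At a point of $S^-_0$ or $\LL$ away from the corners, use a classical exterior-cone barrier plus a constant times $|\zz-\zz_0|^\ga$ to absorb the divergence-form right-hand side. At the corners, use translated copies of the corner barriers from Lemma \ref{lem-apri}: $\vf_1$ at $\O$, $\vf_2$ at $P^0$, $\vf_3$ at $P^+$, $\vf_4$ at $P^-$. For each, take $g^-(\zz_0)\pm(\ve+K\cdot\text{barrier})$. These functions dominate the data up to $\ve$, and by the computed signs they are super- and subsolutions, including the jump inequality on $\II$ and the oblique inequality on $S_0^+$ near $\O$ and $P^+$. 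They squeeze $v$ and give continuity of $v$ up to the boundary with the correct values.

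Thus $v\in C^*(\DD)$ is a weak solution of {\bf Problem MBV}. Lemma \ref{lem-apri} then gives $v\in C^{2,\ga}_{(-1-\ga,-\ga)(-\ga)}(\DD)$ together with \eqref{est-v}.
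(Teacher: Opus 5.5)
Your proposal is correct and follows the paper's proof essentially step for step. Uniqueness comes from Proposition~\ref{prop-compare}; existence comes from Perron's method with $\pm\vf_5$ as global barriers, lifts together with Property~4 and the strong maximum principle to identify $v$ locally with a solution, local barriers (the corner functions of Lemma~\ref{lem-apri} at $\O,P^0,P^\pm$) for continuity up to $\po\DD\setminus S_0^+$, and finally Lemma~\ref{lem-apri} for the weighted regularity and \eqref{est-v}. One small slip: since $\Delta|\zz-\zz_0|^\ga=\ga^2|\zz-\zz_0|^{\ga-2}>0$, adding $K|\zz-\zz_0|^\ga$ works against the supersolution inequality at flat Dirichlet points. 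Use instead a strictly superharmonic barrier such as $|\zz-\zz_0|^\ga\sin(\ga^{1/2}\gth+\frac{\pi}{4})$, with $\gth$ the polar angle about $\zz_0$ measured from the boundary segment, exactly as at the corners.
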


\begin{proof}
The uniqueness of solutions follows from Proposition \ref{prop-compare}. With the a priori estimate in Lemma \ref{lem-apri}, it suffices to show the existence of solutions in the space $ C^*(\DD)$.

It is easy to verify that the barrier function $\vf_5$ in Lemma \ref{lem-apri} satisfies the definition of a supersolution to {\bf Problem MBV}, and $-\vf_5$ is a subsolution. Thus, the set of subsolutions $\Phi^-$ is not empty and is bounded above by $\vf_5$. Set
$$
v(\zz):= \sup_{\vf \in \Phi^-} \vf (\zz),
$$
for any $\zz \in \overline{\DD}$. Next, we will show that $v$ is a weak solution to the problem.

Choose any $\zz^* \in \DD\cup S_0^+$. By the definition of $v$, we can choose a sequence $\{v_k \} \subset \Phi^-$, such that $\lim\limits_{k\to \infty} v_k(\zz^*)= v(\zz^*)$. Obviously, by Property 2 above, $\{v_k \}$ is bounded above by $\vf_5$. Without loss of generality, we may assume that $\{v_k \} $ is uniformly bounded below ( otherwise, we may replace $\{v_k \}$ with $\{\max(v_k, -\vf_5) \}$ ). By the local solvability of {\bf Problem MBV} and Property 3, there exists neighborhood $\NN$, and the sequence of lifts $\{\vb_k \} $, which is uniformly bounded. By Property 4, there exists a convergent subsequence $\{\vb_{k_j} \} \to \vb $, where $\vb\in C^2(\NN^+) \cap C^2(\NN^-) \cap C^1(\overline{\NN^+}\backslash \po \NN) \cap C^1(\overline{\NN^-}\backslash \po \NN)$ is a solution to the same local problem.

We claim that $\vb (\zz) = v(\zz), \forall\, \zz \in \NN\backslash \II$. Obviously, $\vb \le v $ in $\NN$ and $\vb(\zz^*) = v (\zz^*)$. If the claim is false, we may assume that there exists $\zz^0 \in \NN^+ $, such that $\vb(\zz^0) <v(\zz^0)$. Then there exists $v^0 \in \Phi^-$, such that $\vb(\zz^0) <v^0(\zz^0) $. Define $ v^0_j := \max(\vb_{k_j}, v^0)$ and its lift is denoted by $ \vb^0_j$. We again choose a subsequence of $\{\vb^0_j\} $ converging to $\vh$ with the same properties as $\vb$. The fact that $\vb \le \vh $ in $\NN$ implies that $\vh - \vb $ achieves its minimum at the interior point $\zz^*$. Noticing the condition
$D_{\nnu}\vb = D_{\nnu} \vh $ on $\po^b \NN$, the strong maximum principle implies that $\vh = \vb$ in $\NN$, which contradicts with $\vb(\zz^0) < \vh(\zz^0)$.

Therefore, by choosing arbitrary $\zz^*$, we conclude that $v\in C^2(\DD^+) \cap C^2(\DD^-) \cap C^1(\DD^+\cup \II \cup S^+_0)\cap C^1(\DD^-\cup \II)$ satisfies equation \eqref{eqn-v} and the boundary condition \eqref{con-vs+}.

Given any $\zz^* \in \po \DD \backslash S^+_0$, we may construct a local barrier function similar to $\vf_5$, and derive
\begin{align*}
|v(\zz) - g^-(\zz^*)| \le C|\zz - \zz^*|^\ga.
\end{align*}
 This implies that $v$ is continuous up to $\po \DD \backslash S^+_0$ and boundary conditions
 \eqref{con-vs-}
  \eqref{con-vl} hold. Thus, $v \in C^*(\DD)$ is a solution to {\bf Problem MBV}.
\end{proof}

\section{Solution to the Mach configuration problem} \label{sec-mach}
We introduce convex sets for iteration as follows.
 \begin{align}\label{def-sapcesigma}
	\begin{cases}
		\Sigma_1^\gd &:= \{ v: \| v\|_{2,\ga}^{(-1-\ga,-\ga)(-\ga)} \le \gd \},\\
		\Sigma_2^\gd &:= \{ v: \| v\|_{2,\ga}^{(-\ga,- \ga)(-\ga)} \le \gd\},\\
		\Sigma_3^\gd &:= \{ \sh: \| \sh\|_{3,\ga;(0,l)}^{(-1-\ga;\{0,l\})} \le \gd \},\\
		\Sigma^\gd  &:=  \Sigma_1^\gd \times \Sigma_1^\gd  \times  \Sigma_2^\gd   \times  \Sigma_3^\gd  \times   \Sigma_3^\gd .
	\end{cases}
\end{align}
For notational convenience, we use $\|\cdot\|_{\Sigma_i}$ to denote the norm for $ \Sigma_i$. The norm $ \|\cdot\|_\Sigma$ is understood as the summation of the norms of all components.
Given $V=(\wh,\ph,\rhoh,\sh^+,\sh^-) \in \GS^\gd$, solve \eqref{eqn-v2}--\eqref{con-v2} to obtain $v_2$. Let
\begin{align} \label{eqn-v1int}
v_1(\zz)= g^+(z_1)+ \int_{k^+z_1}^{z_2} (F^1-a^{11}(v_2)_{z_1}-a^{21}(v_2)_{z_2})(z_1,x)   dx
\end{align}
Equation \eqref{eqn-v1int} immediately implies that \eqref{eqn-v1z2} holds in $\DD$. In the same way as we derived condition \eqref{con-g5}, we can recover \eqref{eqn-v1z1}. Then we obtain $(\wht, \pht)^\top = T^{-1} \vv$.
We obtain the value of $\rhoht$ on $S_0^{\pm}$ by \eqref{con-g4}. Then set
\begin{align}
&\CA :=  \tfrac{\pt}{\rhot^\gc}|_{S_0^+\cup S_0^-}&&	\rhot := \left(\tfrac{\pt}{\CA}\right)^{1/\gc},
&&	\ut_1 := \frac{ \sqrt{2\BB - \frac{2\gc \pt }{ (\gc -1 )\rhot }}}{\sqrt{1+ \wt^2}}, \label{def-Arhot}\\
& \rhoht^{\pm}:= \rhot^{\pm}- \rho_0^{\pm},&& \uht_1^{\pm}:= \ut_1^{\pm}- u_{10}^{\pm}.
\end{align}

 Using \eqref{eqn-sigmaprime}, we obtain the updated shock functions $\st^{\pm}$ by
\begin{align} \label{eqn-stprime}
\st^{\pm}(z_1) =\int_0^{z_1}\frac{\pt^{\pm}-p^{I\pm,s^{\pm}} }{ \ut_1^{\pm} \wt^{\pm} - u_1^{I\pm,s^{\pm}} w^{I\pm,s^{\pm}} }( x, k^{\pm} x) dx.
\end{align}
Let $\sht^{\pm}= \st^{\pm} - s_0^{\pm}$. Define a map $$\QQ(V): =\Vt=(\wht,\pht,\rhoht,\sht^+,\sht^-).$$
{\bf Existence of a fixed point of $\QQ$.} We will prove the existence of a fixed point of $\QQ$ by using the Schauder fixed point theorem. Let $0<\ga'<\ga, \vv=(v_1,v_2,v_3,s_1,s_2) $. Define
\begin{align*}
 \|\vv	\|_{\GS'} &:= \sum\limits_{i=1,2}\left(\|v_i\|_{2,\ga'}^{(-1-\ga',-\ga')(-\ga')}+ \| s_i\|_{3,\ga';(0,l)}^{(-1-\ga';\{0,l \})}\right) + \|v_3\|_{2,\ga'}^{(-\ga',-\ga')(-\ga')}	,\\
\GS'   &:= \{\vv: \|\vv\|_{\GS'} <\infty \}.
\end{align*}
Obviously, $\GS^\gd$ is a convex and compact subset of $\GS'$. We will show that for suitably small $\gd$ and $\ve$, $\QQ$ is a continuous map from $\GS^\gd$ to itself.

Lemma \ref{lem-apri} and \eqref{def-vv}, \eqref{eqn-v1int} imply that
\begin{align}
	\|(\wht, \pht)	\|_{\GS_1}  =  	\|T
	^{-1}\vv	\|_{\GS_1} \le C  \gk,
\end{align}
where $\gk$ is given by \eqref{def-kappa}.

Set
\begin{align*}
\Fh^i(\BB, \sh, \Ubh):= F^i(\BB, \sh, \Ubh)- F^i(\BB_0, 0, \Ubh_0),
\end{align*}
where $\BB_0$ is the Bernoulli quantity computed from the background incoming flow.
By the definitions of $F^i$ ({\it cf.} \eqref{def-f1}, \eqref{def-f2}, \eqref{def-Fi}), $g^-, g_5$ ({\it cf.} \eqref{con-G1}--\eqref{def-g5}), it is not hard to see that $\Fh^i$ is the sum of products of order 2, consisting of factors $D\sh, \Ubh, D p, Dw$, and first order of $|\BB - \BB_0|$. We may express this fact by
\begin{align*}
\Fh^i=O(|(\Ubh, \sh)|^2 +|\BB - \BB_0|) =  O(|(\Ubh, \sh)|^2 +|U^{I,s}- U^I_0|).
\end{align*}
Similarly,
\begin{align*}
g_5&= O(|(\Ubh, \sh)|^2+|U^{I,s}- U^I_0|), \quad g^- = O(|(\Ubh, \sh)|^2 +|U^{I,s}- U^I_0|).
\end{align*}
 It follows that
\begin{align}  \label{est-kappa}
\gk & \le C (\|V\|_{\GS}^2 +\ve)  \le C (\gd^2 +\ve) ,    \qquad ( \text{ by } \eqref{con-UI})
\end{align}
where $V=(\wh,\ph,\rhoh,\sh^+,\sh^-)$.
Then \eqref{def-Arhot} --\eqref{est-kappa} give rise to
\begin{align}\|\QQ(V)\|_{\GS} & \le   C (\gd^2 +\ve) \le C_0 \ve,
\end{align}
if $\gd^2 \le \ve$.

We assume that $\ve < \tfrac{1}{C_0^2}$ and let $\gd = C_0 \ve$. Then $\gd^2 = C_0^2\ \ve^2 < \ve$, which implies that $\QQ$ maps $\GS^\gd$ to itself.

We use the following compactness argument to show the continuity of $\mathcal{Q}$.

Given a sequence $\{V^n\} \subset \GS^\gd, V^0\in\GS^\gd $ and $V^n \to V^0 $ in $\GS'$, take any subsequence $V^{n_k}$. $\{\QQ V^{n_k} \}\subset \GS^\gd $ implies that there exists a subsequence of $\{\QQ V^{n_k} \}$, denoted by $\{\QQ V^{k}\}$, converging to $\Vt^0$ in $\GS'$. By the definition of $\QQ$, when taking $k \to \infty$, we conclude that $\Vt^0= \QQ V^0$, which implies the continuity of $\QQ$. Thus, by the Schauder fixed point theorem, there exists a fixed point $V$ of $\QQ$.

\smallskip

{\bf Uniqueness of a fixed point of $\QQ$.} Suppose that $V, \Vt \in \GS^\gd$ are two fixed points of $\QQ$. Let $\gd V:= \Vt- V, (\gd v_1, \gd v_2)^\top = T(\gd w, \gd p)^{\top} $. Then $\gd v_2$ is a solution to the following mixed boundary value problem:
\begin{align*}
\begin{cases}
 	(a^{ij}(\gd v_2)_{z_i})_{z_j}  = (\gd F^i)_{z_i} &\text{ in } \DD,\\
 	 D_{\nnu}(\gd  v_2)|_{S^+_0} = \gd g_5,\\
 		\gd v_2|_{S^-_0} =  \gd g^-(z_1), \\
 			\gd v_2|_{\LL} = \gd g^-(l),
\end{cases}
\end{align*}
where
\begin{align*}
\gd F^i &= F^i(\tilde{\BB}, \sht, \tilde{\Ubh})- F^i( \BB, \sh ,  \Ubh),\\
 \gd g_5 &=  g_5(U^{I,\st},  \tilde{\Ub})-  g_5( U^{I,s}, \Ub),\\
\gd g^- &=  g^-(U^{I,\st},  \tilde{\Ub})-  g^-( U^{I,s}, \Ub).
\end{align*}
It is easy to verify that
\begin{align*}
\gd U^{I\pm}(z_1)&:= U^{{I\pm},\st} |_{S_0^{\pm} }-U^{{I\pm},s}|_{S_0^{\pm} }  \\
&= U^{I\pm}(z_1, \st^{\pm}(z_1))-U^{I\pm}(z_1, s^{\pm}(z_1))\\
 &= \int_0^1 D_{z_2}U^{I\pm} (z_1, s^{\pm}(z_1) +t\gd s^{\pm}(z_1)) \dx t \, \gd s^{\pm}(z_1).
\end{align*}
It follows that
\begin{align}
	\|\gd U^{I\pm}\|_{2,\ga;(0,l)}^{(-\ga;\{0,l\})} &\le C \|DU^I\|_{2,\ga;\GO^I_0}\|\gd s^{\pm}\|_{\GS_3} \le C\ve \|\gd s^{\pm}\|_{\GS_3}. \label{est-gdUI}
\end{align}
Let $\BB^I$ be the Bernoulli quantity computed from the incoming flow $U^I$ in $\yy$-coordinates, regarded along $S_0^+\cup S_0^-$ as a function of $z_2$. Then
\begin{align*}
\BB^{\ess}(z_2) &= \BB^I\circ s|_{S_0^+\cup S_0^- } = \BB^I(s^{\sgn z_2}(\tfrac{z_2}{k^{\sgn z_2}})),\\
	\gd \BB (z_2) &:= \BB^{\tilde{\ess}} (z_2)- \BB^\ess(z_2) \\
		&= \int_0^1 (\BB^I)' (s^{\pm}(\tfrac{z_2}{k^{\pm}}) +t\gd s^{\pm} (\tfrac{z_2}{k^{\pm}})) \dx t \, \gd s^{\pm}(\tfrac{z_2}{k^{\pm}}).
\end{align*}
Denote
\begin{align*}
L^+:= (0,k^+l), \quad L^-:= (k^-l, 0), \quad \PP^{\pm}:= \{0, lk^{\pm}\}.
\end{align*}
Then we have
\begin{align}
	\|\gd \BB\|_{1,\ga;L^{\pm}}^{(-\ga;\PP^{\pm})} &\le C \|U^I\|_{3,\ga;\GO^I_0}\|\gd s^{\pm}\|_{\GS_3} \le C\ve \|\gd s^{\pm}\|_{\GS_3}. \label{est-gdBB}
\end{align}
Hence, by \eqref{est-v}, \eqref{def-kappa}, \eqref{est-gdUI} and \eqref{est-gdBB}, we conclude that
\begin{align*}
\|\gd v_2\|_{\GS_1} &\le  \sum_{i=1,2}\|\gd F^i\|_{1,\ga}^{(-\ga,1-\ga)(1-\ga)} +\|\gd g^-\|_{2,\ga;(0,l)}^{(-\ga; \{0,l\})} +\|\gd g_5\|_{1,\ga;(0,l)}^{(1-\ga; \{0,l\})}\\
&\le C \ve \|\gd V\|_{\GS},
\end{align*}
while, by \eqref{eqn-stprime} and \eqref{est-gdUI}, the shock components satisfy
\begin{align*}
\|\gd \sht^{\pm}\|_{\GS_3} &\le C(\|\gd \Ubh\|_{\GS}+\|\gd \BB\|)+C\ve\|\gd \sht^{\pm}\|_{\GS_3},
\end{align*}
where $\|\gd \Ubh\|_{\GS}\le C\ve\|\gd V\|_{\GS}$, as follows from the estimate for $\gd v_2$ above together with \eqref{con-g4}; hence $\|\gd \sht^{\pm}\|_{\GS_3}\le C\ve\|\gd V\|_{\GS}$ for $\ve$ suitably small. Combining this with the estimate for $\gd v_2$, we are led to
\begin{align*}
	\|\gd V\|_{\GS } &\le C \ve \|\gd V\|_{\GS} \le \tfrac{1}{2} \|\gd V\|_{\GS},
\end{align*}
if we choose $\ve < \tfrac{1}{2C}$. This implies that
\begin{align*}
	\|\gd V\|_{\GS }=0,
\end{align*}
which shows the uniqueness of a fixed point of $\QQ$.

\smallskip

\noindent {\bf Proof of Theorem~\ref{thm2}.}
By the fixed point argument above, there exists a fixed point
$V=(\wh,\ph,\rhoh,\sh^+,\sh^-)\in\GS^\gd$ of the map $\QQ$, which gives
a solution $(U,s^\pm)$ to {\bf Problem B}. From the estimates
\eqref{est-kappa} and $\|\QQ(V)\|_{\GS}\le C_0\ve$, together with the
definitions of the norms \eqref{def-UGO}, \eqref{def-fGO} and the
reconstruction formulas \eqref{def-Arhot}, we obtain the estimate
\eqref{est-U}, where the constant $C$ depends only on $\Xi$ and $\ga$. The
uniqueness statement follows from the uniqueness of the fixed point
established above. $\square$

\smallskip

\noindent {\bf Proof of Theorem~\ref{thm1}.}
Let $(U, s^\pm)$ be the solution to {\bf Problem B} given by
Theorem~\ref{thm2}, defined in the Lagrangian coordinates
$(y_1,y_2)$, and let $\psi$ be the corresponding stream function.
Since $\partial_{x_2}\psi=\rho u_1>0$, the map
\begin{equation}
\Phi:\ (x_1,x_2)\mapsto (y_1,y_2)=(x_1,\psi(x_1,x_2))
\end{equation}
is a bi-Lipschitz homeomorphism, the solution to {\bf Problem B} also serves as the unique solution to {\bf Problem A} when transformed back in the $\xx$-coordinates.

The slip line $\GC$ is given by
\begin{align}
\gz (y_1) =\int_0^{y_1 }w(t,0) \dx t. \label{eqn-zeta}
\end{align}
By the regularity of $w$ given in \eqref{est-U}, it follows
\begin{align}
\|\gz\|_{2,\ga;(0,l)}^{(-\ga-1;\{0,l\})}
\le C\,\|U^I-U^I_0\|_{2,\ga;\GO^I_0}. \label{est-zeta}
\end{align}
On the truncation segment $\LL$, condition \eqref{con-truncy} is equivalent to
condition \eqref{con-trunc}. The estimate \eqref{est-U} yields the estimate
\eqref{est-U1}, with constants depending on $\Xi$ and $\ga$. Finally,
the uniqueness of the solution to {\bf Problem A} follows from the
one-to-one correspondence between {\bf Problem A} and {\bf Problem B}
established by the coordinate transformation and the uniqueness in
Theorem~\ref{thm2}. $\square$

\section*{Acknowledgments}
The research of Jun Chen and Xuemei Deng was supported in part by the
Hubei Provincial Natural Science Foundation of China (Grant No. 2025AFB530).


\begin{thebibliography}{00}

\bibitem{Bae} M. Bae,
Stability of contact discontinuity for steady Euler system in infinite duct,
{\it Z. Angew. Math. Phys.}, {\bf 64} (2013), 917--936.

\bibitem{BCF} M. Bae, G.-Q. Chen and M. Feldman,
Regularity of solutions to regular shock reflection for potential flow,
{\it Invent. Math.}, {\bf 175} (2009), 505--543.

\bibitem{BCF1} M. Bae, G.-Q. Chen and M. Feldman,
Prandtl--Meyer reflection for supersonic flow past a solid ramp,
{\it Quart. Appl. Math.}, {\bf 71} (2013), 583--600.

\bibitem{BCF2} M. Bae, G.-Q. Chen and M. Feldman,
{\it Prandtl--Meyer Reflection Configurations, Transonic Shocks, and Free Boundary Problems},
Memoirs of the American Mathematical Society, Vol. 301, No. 1507,
American Mathematical Society, Providence, RI, 2024.

\bibitem{BP} M. Bae and H. Park,
Contact discontinuities for 2-dimensional inviscid compressible flows in infinitely long nozzles,
{\it SIAM J. Math. Anal.}, {\bf 51} (2019), 1730--1760.

\bibitem{Busemann} A. Busemann,
{\it Gasdynamik}, Handbuch der Experimentalphysik, Vol. IV,
Akademische Verlagsgesellschaft, Leipzig, 1931.

\bibitem{CCF1} G.-Q. Chen, J. Chen and M. Feldman,
Stability and asymptotic behavior of transonic flows past wedges for the full Euler equations,
{\it Interfaces Free Bound.}, {\bf 19} (2017), 591--626.

\bibitem{CCX} G.-Q. Chen, J. Chen and W. Xiang,
Stability of attached transonic shocks in steady potential flow past three-dimensional wedges,
{\it Comm. Math. Phys.}, {\bf 387} (2021), 111--138.

\bibitem{CF10} G.-Q. Chen and M. Feldman,
Global solutions of shock reflection by large-angle wedges for potential flow,
{\it Ann. of Math.}, {\bf 171} (2010), 1067--1182.

\bibitem{CF18} G.-Q. Chen and M. Feldman,
{\it The Mathematics of Shock Reflection--Diffraction and von Neumann's Conjectures},
Annals of Mathematics Studies, Vol. 197, Princeton University Press, Princeton, NJ, 2018.

\bibitem{CFX} G.-Q. Chen, M. Feldman and W. Xiang,
Convexity of self-similar transonic shocks and free boundaries for the Euler equations for potential flow,
{\it Arch. Ration. Mech. Anal.}, {\bf 238} (2020), 47--124.

\bibitem{CHWX} G.-Q. Chen, F.-M. Huang, T.-Y. Wang and W. Xiang,
Steady Euler flows with large vorticity and characteristic discontinuities in arbitrary infinitely long nozzles,
{\it Adv. Math.}, {\bf 346} (2019), 946--1008.

\bibitem{ADV1} J. Chen,
Stability of vortex lines attached to airfoils in subsonic Euler flows,
{\it Adv. Math.}, {\bf 503} (2026), 111237.

\bibitem{ChenDeng} J. Chen and X. Deng,
Elliptic equations in divergence form with discontinuous coefficients in domains with corners,
{\it Acta Math. Sci. Ser. B}, {\bf 44} (2024), 1903--1915.

\bibitem{CXZ} J. Chen, Z. Xin and A. Zang,
Subsonic flows past a profile with a vortex line at the trailing edge,
{\it SIAM J. Math. Anal.}, {\bf 54} (2022), 912--939.

\bibitem{ChenSX1} S.-X. Chen,
Stability of a Mach configuration,
{\it Comm. Pure Appl. Math.}, {\bf 59} (2006), 1--35.

\bibitem{ChenSX2} S.-X. Chen,
Mach configuration in pseudo-stationary compressible flow,
{\it J. Amer. Math. Soc.}, {\bf 21} (2008), 63--100.

\bibitem{ChenSX3} S.-X. Chen,
E-H type Mach configuration and its stability,
{\it Comm. Math. Phys.}, {\bf 315} (2012), 563--602.

\bibitem{CF} R. Courant and K. O. Friedrichs,
{\it Supersonic Flow and Shock Waves},
Interscience Publishers, New York, 1948.

\bibitem{EllingLiu} V. Elling and T.-P. Liu,
Supersonic flow onto a solid wedge,
{\it Comm. Pure Appl. Math.}, {\bf 61} (2008), 1347--1448.

\bibitem{FangLiuYuan} B. Fang, L. Liu and H. Yuan,
Global uniqueness of transonic shocks in two-dimensional steady compressible Euler flows,
{\it Arch. Ration. Mech. Anal.}, {\bf 207} (2013), 317--345.


\bibitem{GH} D. Gilbarg and L. H\"ormander,
 Intermediate Schauder estimates,
{\it Arch. Ration. Mech. Anal.}, \textbf{74} (1980),
297--318.

\bibitem{gt} D. Gilbarg and N. S. Trudinger,
{\it Elliptic Partial Differential Equations of Second Order},
2nd ed., Springer-Verlag, Berlin, 1983.

\bibitem{Lieberman1} G. M. Lieberman,
The Perron process applied to oblique derivative problems,
{\it Adv. Math.}, {\bf 55} (1985), 161--172.

\bibitem{Lieberman2} G. M. Lieberman,
Mixed boundary value problems for elliptic and parabolic differential equations of second order,
{\it J. Math. Anal. Appl.}, {\bf 113} (1986), 422--440.

\bibitem{Mach} E. Mach,
\"Uber den Verlauf von Funkenwellen in der Ebene und im Raume,
{\it Sitzungsber. Akad. Wiss. Wien}, {\bf 78} (1878), 819--838.

\bibitem{Meyer} Th. Meyer,
{\it \"Uber zweidimensionale Bewegungsv\"ange in einem Gas, das mit \"Uberschallgeschwindigkeit str\"omt},
Dissertation, G\"ottingen, 1908; Forschungsheft des Vereins deutscher Ingenieure,
Vol. 62, Berlin, 1908, 31--67.

\bibitem{Prandtl} L. Prandtl,
Allgemeine \"Uberlegungen \"uber die Str\"omung zusammendr\"uckbarer Fl\"ussigkeiten,
{\it Z. Angew. Math. Mech.}, {\bf 16} (1936), 129--142.

\bibitem{Serre} D. Serre,
Shock reflection in gas dynamics, in: {\it Handbook of Mathematical Fluid Dynamics}, Vol. 4,
Elsevier, Amsterdam, 2007, 39--122.

\bibitem{vonNeumann} J. von Neumann,
Oblique reflection of shocks, in: {\it Collected Works}, Vol. 6,
Pergamon Press, Oxford, 1963, 238--299.

\end{thebibliography}
\end{document}